\title{Colored unavoidable patterns and balanceable graphs}
\author{Matt Bowen\thanks{McGill University, Canada. \texttt{matthew.bowen2@mail.mcgill.ca} } \and Adriana Hansberg \thanks{Instituto de Matem\'aticas, UNAM Juriquilla, Quer\'etaro, M\'exico. \texttt{ahansberg@im.unam.mx}} \and Amanda Montejano \thanks{UMDI, Facultad de Ciencias, UNAM Juriquilla, Quer\'etaro, M\'exico. \texttt{amandamontejano@ciencias.unam.mx}}\and Alp Müyesser\thanks{Freie Universit\"at Berlin and Berlin Mathematical School, Germany. \texttt{alp.muyesser@fu-berlin.de} }}
\date{\vspace{-5ex}}
\newcommand{\ep}{\varepsilon}
\newcommand{\ex}{{\rm ex}}
\newcommand{\bal}{{\rm bal}}
\theoremstyle{plain}
\newtheorem{theorem}{Theorem}[section]
\newtheorem{lemma}[theorem]{Lemma}
\newtheorem{proposition}[theorem]{Proposition}
\newtheorem{claim}{Claim}[section]
\newtheorem{conjecture}[theorem]{Conjecture}
\newtheorem{definition}[theorem]{Definition}
\begin{document}
\maketitle

\begin{abstract}
  \par We study a Tur\'an-type problem on edge-colored complete graphs. We show that for any $r$ and $t$, any sufficiently large $r$-edge-colored complete graph on $n$ vertices with $\Omega(n^{2-1/tr^r})$ edges in each color contains a member from certain finite family $\mathcal{F}_t^r$ of $r$-edge-colored complete graphs. We conjecture that $\Omega(n^{2-1/t})$ edges in each color are sufficient to find a member from ${\mathcal{F}}_t^r$. A result of Gir\~ao and Narayanan confirms this conjecture when $r=2$. 
  \par Next, we study a related problem where the corresponding Tur\'an threshold is linear. We call an edge-coloring of a path $P_{rk}$ balanced if each color appears $k$ times in the coloring. We show that any $3$-edge-coloring of a large complete graph with $kn+o(n)$ edges in each color contains a balanced $P_{3k}$. This is tight up to a constant factor of $2$. For more colors, the problem becomes surprisingly more delicate. Already for $r=7$, we show that even $n^{2-o(1)}$ edges from each color does not guarantee existence of a balanced $P_{7k}$.
\end{abstract}

\section{Introduction}
\par The basic principle behind Ramsey theory is that no matter how a system is partitioned, there must exist an organized subsystem that is entirely contained inside one of the parts of the partition. Recently, numerous authors have pursued a line of research investigating the emergence of subsystems that are organized, yet meet every single part in the partition. Of course, to do so, one needs to assume that each part in the partition is sufficiently large. In order to state a prototypical result in this direction, we first give the following definition (to see results with a similar flavor which will not be addressed in the rest of this paper, see \cite{ADGMS, DMM,DiMu,LMT, MuTa}). Let $\mathcal{F}_t$ be the family of two-edge-colored complete graphs on $2t$ vertices where one color forms a clique of size $t$, or two disjoint cliques of size $t$. The following was conjectured by Bollob\'as, and proved by Cutler and Mont\'agh. 

\begin{theorem}[\cite{cutler}]\label{thm:cutler}
  \par Let $0 < \varepsilon \le \frac{1}{2}$ be a real number and $t \geq 1$ an integer. For large enough $n$, any two-edge-coloring of $K_n$ with at least $\ep\binom{n}{2}$ edges in each color contains a member of $\mathcal{F}_t$.
\end{theorem}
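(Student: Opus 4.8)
The plan is to reduce the whole statement to producing a single monochromatic clique with a large common neighborhood in the other color. The key observation is that a member of $\mathcal{F}_t$ appears on a vertex set $A \cup B$ (with $|A|=|B|=t$) as soon as $A$ is a monochromatic clique in some color $c$, every vertex of $B$ is joined to all of $A$ by edges of the opposite color $\bar c$, and $B$ is itself monochromatic: if $B$ has color $c$ we obtain two disjoint $c$-cliques joined by a $\bar c$-bipartite graph, and if $B$ has color $\bar c$ we obtain a single $c$-clique whose complement on $A\cup B$ is entirely $\bar c$. Hence it suffices to produce a color $c$ and a monochromatic $c$-clique $A$ of size $t$ whose common $\bar c$-neighborhood has at least $R(t,t)$ vertices, since such a neighborhood automatically contains a monochromatic $K_t$ to serve as $B$. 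Call this target $(\star)$.

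To establish $(\star)$ I would combine the density hypothesis with two classical tools: the K\H{o}v\'{a}ri--S\'{o}s--Tur\'{a}n theorem, which guarantees that a color class with $\Omega(n^2)$ edges contains a complete bipartite graph $K_{s,s}$ for any fixed $s$ once $n$ is large, and Ramsey's theorem in the form of the numbers $R(a,b)$. Fix the bipartite Ramsey threshold $M = M(t)$, so that every $2$-coloring of $K_{M,M}$ contains a monochromatic $K_{t,t}$, and set $s = R(t,M)$; both are constants depending only on $t$. The argument then splits according to whether both colors are rich in cliques.

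First I would treat the case in which both colors contain a clique of size $M$, say a blue clique $Q_b$ and a red clique $Q_r$; these overlap in at most one vertex, since two common vertices would force an edge that is simultaneously red and blue. Passing to disjoint $M$-subsets and applying bipartite Ramsey to the edges between them yields a monochromatic $K_{t,t}$ between some $X \subseteq Q_b$ and $Y \subseteq Q_r$. As $X$ is a blue clique and $Y$ a red clique, whichever color this $K_{t,t}$ has, one of $X,Y$ plays the role of $A$ and the other of $B$ in $(\star)$. In the remaining case some color, say blue, is $K_M$-free. Here I would spend blue density through K\H{o}v\'{a}ri--S\'{o}s--Tur\'{a}n to find a blue $K_{s,s}$ with parts $S,T$; since $|S| = s = R(t,M)$ and blue contains no $K_M$, Ramsey forces a red $K_t =: A$ inside $S$, while $T$ (of size at least $R(t,t)$) contains a monochromatic $K_t =: B$, with all edges between $A$ and $B$ blue by construction. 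This is exactly $(\star)$.

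The main obstacle is that one cannot simply invoke Ramsey to manufacture the cliques that $(\star)$ requires: a dense color class may be essentially bipartite and so contain no clique larger than $K_2$, so the density hypothesis has to be spent on building the bipartite skeleton $K_{s,s}$ rather than a clique. The delicate point is therefore the interface between the two regimes --- ensuring that in the clique-poor regime one side of the monochromatic $K_{s,s}$ is nonetheless forced, via Ramsey together with the absence of a large clique in the \emph{other} color, to contain the clique $A$, so that the oppositely colored bipartite connection to $B$ comes for free.
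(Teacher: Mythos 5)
Your proposal is correct and is essentially the same argument this paper uses for the stronger Theorem~\ref{thm:GiNa} (the paper itself only cites Theorem~\ref{thm:cutler}): the identical skeleton of a monochromatic clique with a large oppositely colored common neighborhood, a dichotomy on whether both colors contain large cliques, and a bipartite Ramsey step between a red and a blue clique, with K\H{o}v\'ari--S\'os--Tur\'an playing the role that dependent random choice plays there (the paper explicitly describes its proof as this argument from \cite{caro} with the K\H{o}v\'ari--S\'os--Tur\'an step upgraded to dependent random choice). One cosmetic slip: two $M$-cliques sharing a vertex leave only $M-1$ disjoint vertices on each side, so you should take cliques of size $M+1$ (or otherwise adjust the constants) before invoking bipartite Ramsey.
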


 \par 
 \par Cutler and Mont\'agh's argument shows that one can take $n\geq 4^{t/\varepsilon}$ to find a member of $\mathcal{F}_t$. Fox and Sudakov improved their result by showing that one can take $n\geq \varepsilon^{-ct}$ for some absolute constant $c$, which is tight up to the value of $c$ \cite{stuff}. In an attempt to generalize these results to a setting where an arbitrary number of colour classes are allowed, the following definition was given in \cite{multicolorbollobas}.
\begin{definition}[\cite{multicolorbollobas}]
  Let $k, t$ and $r$ be positive integers. Let $H : = K_{kt}$ be a complete graph on $kt$ vertices whose edges are $r$-colored. $H$ belongs to the family $\mathcal{F}_t^r$ if there is a partition $V(H) = \bigsqcup_{i\in[k]}V_i$ of the vertices of $H$ into $k$ parts, with $|V_i|=t$, such that:
  \begin{enumerate}
     \item For all $i,j\in[k]$, $H[V_i]$ and $H[V_i\times V_j]$ are monochromatic.
      \item All $r$-colors are present in $H$.
      \item Not all $r$ colors are present in $H\setminus V_i$, for any $i\in[k]$.
  \end{enumerate}
\end{definition}
\par We make a couple of remarks regarding this definition.  By $(1)$, the color of any edge in $H$ depends only on the parts the endpoints come from. By $(3)$, it is not hard to see that any graph in $\mathcal{F}_t^r$ can have at most $2r$ parts (see the argument in \cite{multicolorbollobas}). Hence, $\mathcal{F}_t^r$ is a finite set. Observe also that $\mathcal{F}^2_{t} = \mathcal{F}_t$. 
\par By a \textit{pattern}, we denote a maximal subfamily of $ \mathcal{F}_t^r$ consisting of graphs colored all the same up to permutations of their colors. There are two different patterns in $\mathcal{F}^2_t$ and nine different patterns in $\mathcal{F}^3_{t}$,  see an illustration of the latter in Figure \ref{fig:F3}. In general, the number of different patterns in $\mathcal{F}_t^r$ grows exponentially with $r$ as one can embed the family of all non-isomorphic tournaments on $r$ vertices into $\mathcal{F}_t^r$ by associating to each vertex a $K_t$ of a different color, and giving the bipartite graphs between the $K_t$'s the color of the $K_t$ they point to in the tournament (see \cite{Moon} for information regarding the number of non-isomorphic tournaments on $n$ vertices).
\par We can now state the multicolor generalization of the result of Cutler and Mont\'agh, due to Lamaison, and the first and the last author. 
\begin{theorem}[\cite{multicolorbollobas}]\label{thm:multicolorbollobas}
For any $r\geq 2$, there exists a constant $c:=c(r)$ such that, for any $\ep>0$ and $t\geq 2$, any $r$-coloring of a $K_n$ with $n\geq \ep^{-ct}$ and $\ep\binom{n}{2}$ edges in each color contains a member of $\mathcal{F}_t^r$.
\end{theorem}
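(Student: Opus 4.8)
The plan is to split the argument into a \emph{finding} step and a \emph{pruning} step. Call a collection of pairwise disjoint sets $W_1,\dots,W_m\subseteq V(K_n)$, each of size exactly $t$, a \emph{homogeneous blow-up} if every $W_i$ induces a monochromatic clique and every bipartite graph $K_n[W_i\times W_j]$ is monochromatic; its \emph{reduced colouring} is the induced $r$-colouring (with loops) of $K_m$ recording these colours. A homogeneous blow-up whose reduced colouring uses all $r$ colours already satisfies conditions $(1)$ and $(2)$ of the definition of $\F_t^r$, so the entire difficulty is $(a)$ producing one such colourful blow-up and $(b)$ arranging condition $(3)$. For $(b)$ I would prune greedily: while some part $W_i$ can be deleted without destroying a colour, delete it. The process terminates in a homogeneous blow-up in which deleting any part kills a colour, which is exactly $(3)$; conditions $(1)$ and $(2)$ are preserved throughout, and by the remark in the excerpt the surviving structure has at most $2r$ parts. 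Hence it is a member of $\F_t^r$, and $(b)$ is routine.

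So the whole game is to find a homogeneous blow-up whose reduced colouring is surjective onto $[r]$. Two engines are available within the target bound $n\ge \ep^{-ct}$. First, since each colour class has at least $\ep\binom n2$ edges, the K\H{o}v\'ari--S\'os--Tur\'an theorem (equivalently the Zarankiewicz bound $\ex(n,K_{t,t})=O(n^{2-1/t})$) guarantees, once $n\gtrsim \ep^{-\Theta(t)}$, that each colour $c$ contains a monochromatic $K_{t,t}$; that is, every colour can be realised \emph{either} as the clique colour of a part \emph{or} as the bipartite colour between two parts. This is precisely where the hypothesis $n\ge\ep^{-ct}$ is consumed. Second, a homogeneous blow-up with a bounded number of parts can be extracted by a single-exponential Ramsey-type argument -- iteratively pulling out monochromatic $t$-cliques while passing to a large sub-reservoir on which the already-chosen parts are colour-homogeneous -- at cost only $2^{O_r(t)}$, which is dominated by $\ep^{-ct}$ for $\ep\le \tfrac12$ once $c=c(r)$ is large enough (in particular $c\ge r\log r$ absorbs the $r$-colour Ramsey number of $K_t$).

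The hard part is to make these two engines cooperate, i.e.\ to guarantee that \emph{every} colour appears in \emph{one} homogeneous blow-up without exceeding the single-exponential budget. The tension is real: a colour may be globally dense yet locally rare, so the greedy homogenization above -- which at each step discards all but a majority colour when shrinking the reservoir -- can lose such a colour entirely, even though its density guarantees it somewhere in the whole graph. Conversely, the obvious repair of first locating a separate $K_{t,t}$ gadget for each of the $r$ colours and only then homogenizing all $2r$ gadget-sets simultaneously fails for budget reasons: making $2r$ prescribed sets pairwise and internally monochromatic by product Ramsey costs a tower in $t$ of height $\Theta(r)$, and even a height-two tower $2^{2^t}$ already dwarfs $\ep^{-ct}$ when $\ep=\tfrac12$. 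Reconciling colour-coverage with single-exponential homogenization is, I expect, the crux of the proof.

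To resolve this I would \emph{cover the colours before shrinking} and then homogenize only once. Concretely, interleave the two engines: build the homogeneous blow-up one part (or one $K_{t,t}$) at a time, but at each step use the freedom in which part to add so as to cover a colour not yet represented, invoking the $\ep$-density to show that as long as some colour is missing the current reservoir still contains a monochromatic $K_{t,t}$ (or $K_t$) in a missing colour that can be added as the next part(s) and homogenized against the bounded number of already-chosen parts at a multiplicative cost of only $r^{O(t)}$ per step. Since at most $O_r(1)$ steps are needed to cover all $r$ colours and the reservoir shrinks by a factor $r^{O(t)}$ per step, the reservoir stays of size $n/r^{O_r(t)}\ge \ep^{-\Theta(t)}$, exactly the regime in which K\H{o}v\'ari--S\'os--Tur\'an still applies to the \emph{missing} colour inside it. The subtlety to nail down is that a missing colour remains dense \emph{inside the current reservoir}; if the density argument fails there, one instead locates that colour's $K_{t,t}$ on the full vertex set at the very start and carries it through the homogenization as a protected seed. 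Feeding the resulting colourful homogeneous blow-up into the pruning step then produces a member of $\F_t^r$, and the only place the bound $\ep^{-ct}$ is actually forced is the K\H{o}v\'ari--S\'os--Tur\'an applications, matching the claimed dependence.
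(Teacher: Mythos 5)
Your reduction to finding a single homogeneous blow-up using all $r$ colours, followed by greedy pruning to enforce condition (3), is exactly how the argument finishes: this theorem is quoted from \cite{multicolorbollobas}, and the paper's own proof of the closely analogous Theorem~\ref{thm:extremalmulticolorbollobas} (which the authors say mirrors the proof of the quoted result) ends with ``a blow-up of a complete graph of order $2r$ \dots\ where all $r$ colors are used, so this structure must contain as a subgraph a member from $\mathcal{F}_t^r$.'' You have also correctly located the crux --- reconciling coverage of all $r$ colours with a single-exponential homogenization budget --- but your resolution of that crux has a genuine gap, and the two tools that actually close it are absent from your proposal.

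First, you dismiss the strategy of fixing one gadget per colour up front and then homogenizing all $O(r)$ prescribed sets simultaneously, on the grounds that product Ramsey costs a tower. That is true of product Ramsey, but this is precisely the step that the iterated \emph{bipartite dependent random choice} lemma (Lemma~\ref{lemma:superdrc}, Corollary~2.6 of \cite{multicolorbollobas}) performs at only logarithmic cost: given $O(r)$ disjoint sets of size $m$ it returns subsets of size $\Theta_{r,t}(\log m)$ that are internally monochromatic and pairwise monochromatically joined. Since $\log(\ep^{-ct})\gg t$ once $c=c(r)$ is large, this fits the budget, so the ``up-front'' strategy you reject is essentially the correct one. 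Second, a single $K_{t,t}$ per colour found by K\H{o}v\'ari--S\'os--Tur\'an is too fragile to act as a ``protected seed'': homogenization will not preserve a prescribed $t$-subset of it. The actual argument instead applies dependent random choice to each colour class $i$ separately to obtain a large set $S_i$ in which \emph{every} $t$-subset has a large common neighbourhood \emph{in colour~$i$}. This robustness is what survives all later refinements: after Lemma~\ref{lemma:superdrc} shrinks the $S_i$ to homogeneous $X_i$, the common colour-$i$ neighbourhood $Y_i$ of $X_i$ is still large and the bipartite graph between $X_i$ and $Y_i$ is entirely colour $i$ by construction, so every colour is guaranteed to appear in the final blow-up regardless of how the reservoirs were cut down. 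Your interleaved scheme, by contrast, must still confront the scenario you yourself flag --- a globally dense colour with no edges left in the shrunken reservoir --- and your fallback for that case loops straight back to the tower-cost obstruction you identified. Without dependent random choice, in both its per-colour and its simultaneous bipartite forms, the proposal does not close.
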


\subsection{Tur\'an-bounds for colored unavoidable patterns}
The bound in Theorem \ref{thm:multicolorbollobas}, up to the dependence of $c$ on $r$, is optimal by a random construction, similar to the one given in \cite{stuff}. However, if one is interested in the minimum density of edges required of each color in order to force a member of $\mathcal{F}_t^r$, assuming that each color has $\Theta(n^2)$ many edges is not necessary. To discuss this extremal aspect of the problem precisely, we first define the following Tur\'an-type parameter. 
\begin{definition}
Let $r, t, n$ be positive integers. Let $\mathcal{F}$ be a family of $r$-edge colored graphs. We denote by $\ex_r(K_n, \mathcal{F})$ the minimum integer $m$ (if it exists) such that, for any $r$-edge coloring of $K_n$ with more than $m$ edges in each of the $r$ colors, $K_n$ contains a member of $\mathcal{F}$. If there is no such $m$, we set $\ex_r(K_n, \mathcal{F})=\infty$. 
\end{definition} 

We begin by a discussion of the the case when $r=2$. Caro and the second and third author showed that there exists a $\delta:=\delta(t)$ such that $\ex_2(K_n, \mathcal{F}_t) = \Omega(n^{2-\delta})$ when $n$ is large enough \cite{caro}. Shortly after, Gir\~ao and Narayanan \cite{girao} proved that $\delta=1/t$ is best possible up to the involved constants, supposing that the well-known conjecture that $\ex(K_{t,t})=\Omega(n^{2-1/t})$ for all $t$ is true \cite{KoSoTu} (we give a shorter proof of their result in Section~\ref{sec:shorterproof}).

\begin{figure}\label{fig:F3}
\begin{center}
\includegraphics[]{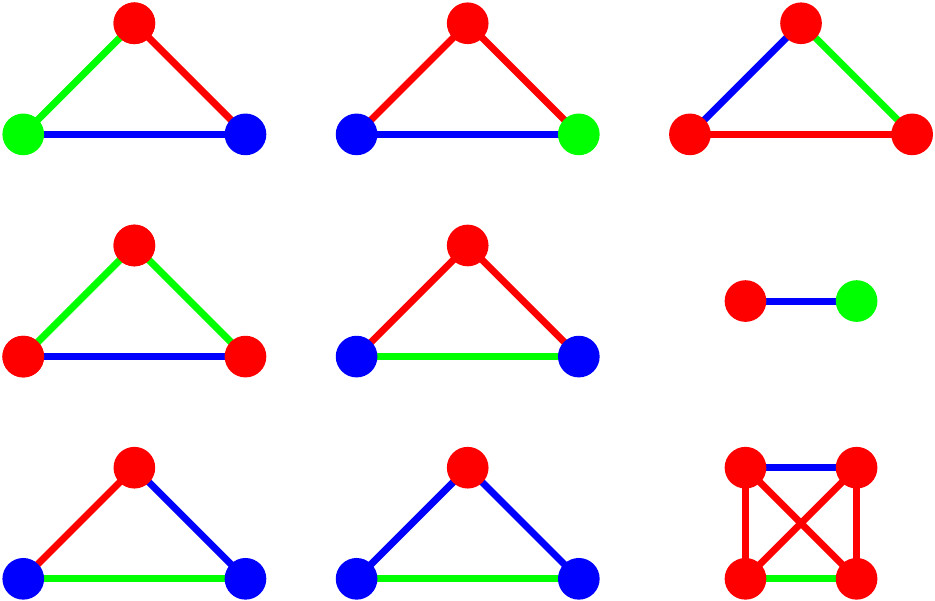}
\end{center}
\caption{The patterns from $\mathcal{F}_{3,t}$. The colored vertices represent cliques of the corresponding color, of size $t$. Similarly, edges represent complete bipartite graphs of the corresponding color between the associated cliques.}
\end{figure}

For arbitrary $r \ge 3$, the structure of the graphs in $\mathcal{F}_t^r$ is a lot more complicated (already for $r=3$ there are $9$ different patterns, see Figure \ref{fig:F3}). However, it is still natural to suspect that finding large bipartite graphs are the only barrier for finding the patterns in $\mathcal{F}_t^r$, meaning that $\Omega(n^{2-1/t})$ edges in each color class should guarantee the existence of a member from  $\mathcal{F}_t^r$. We conjecture that this is indeed the case.  

\begin{conjecture}\label{mainconjecture}
For any $r\geq 2$ and $t\geq 1$, there exists a constant $C:=C(r,t)$ such that,  for $n$ large enough,  $\ex_r(K_n, \mathcal{F}_t^r) \le Cn^{2-1/t}$.
\end{conjecture}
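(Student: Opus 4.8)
The plan is to reduce the problem to producing a single \emph{homogeneous blow-up} and then trimming it. Call a collection of pairwise disjoint $t$-sets $V_1,\dots,V_k$ \emph{homogeneous} if every clique $K_n[V_i]$ is monochromatic and every bipartite graph $K_n[V_i\times V_j]$ is monochromatic. The key observation is that if we can find a homogeneous collection in which all $r$ colors appear (on the cliques or on the bipartite parts), then we are done: we greedily delete parts $V_i$ as long as deletion leaves all $r$ colors present, and the resulting sub-collection is still homogeneous, still uses all $r$ colors, and is now minimal with this property, hence is a member of $\mathcal{F}_t^r$ (the bound of $2r$ parts then comes for free from the argument in \cite{multicolorbollobas}). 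Condition (1) is automatic from homogeneity and condition (3) is exactly the stopping condition of the greedy deletion, so the whole task becomes: \textbf{find a homogeneous blow-up that witnesses every color.}

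For the engine I would lean on K\H{o}v\'ari--S\'os--Tur\'an (KST). Its \emph{unconditional} upper bound gives that any graph on $n$ vertices with more than $Cn^{2-1/t}$ edges contains $K_{t,s}$, for any constant $s$ with $C=C(t,s)$; note that we do \emph{not} need the conjectural lower bound for $\ex(K_{t,t})$ here. Applying this to a color with $>Cn^{2-1/t}$ edges yields a monochromatic $K_{t,s}$ with a $t$-side $A$ and a large constant reservoir $B$ of size $s$. From here I would grow the structure by a reservoir construction: maintain parts $V_1,\dots,V_j$ together with a reservoir $B_j$ such that $V_i\times B_j$ is monochromatic for every $i$. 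New parts are then essentially free, since any $V_{j+1}\subseteq B_j$ automatically satisfies homogeneity with all earlier parts (as $V_i\times B_j$ is already monochromatic and $B_{j+1}\subseteq B_j$); one only needs to locate, inside the constant-size $B_j$, a fresh monochromatic $t$-clique attached monochromatically to a smaller reservoir, which is a purely Ramsey-type fact proved by iterated pigeonhole (greedily extract a sequence whose forward edges have a fixed majority color, then pass to a monochromatic sub-clique). The point of keeping every invocation of KST on the \emph{host} graph, at cost $n^{2-1/t}$, and never on an already-sparsified subgraph, is exactly what must happen to reach the sharp exponent $1/t$ rather than the $1/(tr^r)$ that compounding produces.

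The main obstacle, and the reason the conjecture is open, is \textbf{collecting all $r$ colors} into one homogeneous structure. At the critical density the average degree in a color is $n^{1-1/t}$, so the common neighborhood of a $t$-clique in a fixed color has only constant expected size; the homogeneous reservoir attached to a full $t$-clique is therefore \emph{inherently} of constant size and need not meet every color class. An adversary can keep all $r$ colors globally dense while arranging that any particular bounded homogeneous region is color-poor, so the reservoir construction run from a single seed may only ever see a few colors. To force a missing color $c$ one must find a color-$c$ witness $K_{t,t}$ elsewhere (guaranteed by density) and \emph{fuse} it into the current structure, that is, make witnesses of different colors mutually homogeneous. Reconciling them seems to require either re-applying KST inside already-located structures---which compounds the exponent and is presumably what yields $1/(tr^r)$---or a genuinely simultaneous argument producing a bounded family of pairwise-homogeneous, constant-size reservoirs that collectively realize all $r$ colors while the $n^{2-1/t}$-threshold is paid only on the host.

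I expect this simultaneous fusion step to be the crux. For $r=2$ it is manageable because there is only one ``other'' color to place: after fixing a monochromatic $K_{s,s}$ in the majority color, the two monochromatic $t$-cliques found on its sides fall into one of the admissible two-color patterns unless the whole region collapses to a single color, a degenerate case that the global density of the second color rules out---morally the Gir\~ao--Narayanan result. For general $r$ the analogous case analysis explodes, and the secondary bookkeeping of ensuring the trimmed pattern is minimal (controlling which colors sit on cliques versus on bipartite parts) should be carried out \emph{together} with color-collection rather than afterward. I would therefore concentrate all effort on a single lemma: the existence of a constant number of pairwise-homogeneous, constant-size reservoirs that together exhibit all $r$ colors, from which the theorem would follow by the engine and the trimming described above.
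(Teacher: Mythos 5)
The statement you are trying to prove is stated in the paper as a \emph{conjecture}: the paper does not prove it, and your proposal does not either. What the paper actually establishes is the weaker Theorem~\ref{thm:extremalmulticolorbollobas}, with exponent $2-1/(tr^{r-1})$, together with a discussion (Section~\ref{sec:improvements}) of exactly why the exponent $2-1/t$ is out of reach of the current techniques. Your outline is a reasonable and well-informed reduction --- the trimming of a homogeneous blow-up to a minimal member of $\mathcal{F}_t^r$ is correct and is essentially how the paper concludes its own proof, and your use of the unconditional K\H{o}v\'ari--S\'os--Tur\'an bound in place of dependent random choice is a legitimate variant for producing the first seed. But the entire mathematical content of the conjecture is concentrated in the step you explicitly defer: producing a bounded family of pairwise-homogeneous, constant-size reservoirs that collectively realize all $r$ colors while the $n^{2-1/t}$ threshold is charged only against the host graph. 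You name this as ``the crux'' and ``a single lemma'' to be concentrated on, but you give no argument for it, so there is no proof here --- only a correct diagnosis of where the difficulty lies.

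Your diagnosis does match the paper's own: at the critical density the common neighborhood of a $t$-set in a fixed color is of constant size, so once the parts $X_i$ have been fixed at size $t$ one cannot afford the further shrinking needed to make the remaining bipartite graphs monochromatic, and any re-application of a K\H{o}v\'ari--S\'os--Tur\'an or dependent-random-choice step inside an already-located constant-size structure compounds the exponent --- this compounding is precisely the source of the $r^{r-1}$ factor in Theorem~\ref{thm:extremalmulticolorbollobas}. Your remark that the $r=2$ case avoids the fusion problem because there is only one other color to place is also consistent with the paper's short proof of Theorem~\ref{thm:GiNa}. Where the paper goes further than your proposal is in suggesting a concrete candidate for the missing lemma: a sparse-regularity approach in which the reservoirs are linear-sized (rather than constant-sized) neighborhoods obtained from a dependent-random-choice lemma that tolerates a small fraction of bad subsets, so that a single regular pair in each color guarantees edges of every color between any two linear-sized reservoirs. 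The obstruction the paper identifies there --- that in sparse graphs one cannot in general find even one regular pair carrying many edges --- is the reason the conjecture remains open, and your proposal does not address it.
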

We show that at this same density (with $\Omega(n^{2-1/t})$ edges) we can find a member from $\mathcal{F}_t^{s}$, where $s =  \left\lfloor\frac{t}{r^r}\right\rfloor$. 
\begin{theorem}\label{thm:extremalmulticolorbollobas}
For any $r\geq 2$ and $t\geq 1$, there exists a constant $C:=C(r,t)$ such that,  for $n$ large enough, $\ex_r(K_n, \mathcal{F}_t^r) \le Cn^{2-1/ t'}$,  where $t' =tr^{r-1}$.
\end{theorem}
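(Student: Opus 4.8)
The plan is to reduce the theorem to finding a single combinatorial object: a \emph{homogeneous blow-up meeting every color}, by which I mean pairwise disjoint vertex sets $W_1,\dots,W_k$, each of size $t$, such that every $W_i$ spans a monochromatic clique, every pair $(W_i,W_j)$ spans a monochromatic complete bipartite graph, and all $r$ colors occur among these cliques and bipartite graphs. Such an object is almost a member of $\mathcal{F}_t^r$: conditions (1) and (2) hold by construction, and condition (3) can be arranged for free by repeatedly deleting any part whose removal does not destroy a color, until no part is deletable. This terminates, leaving $k\le 2r$ parts, exactly as in the remark after the definition. So it suffices to produce a homogeneous blow-up meeting every color with parts of size $t$.

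To build it I would extract the parts one at a time, maintaining a large ``reservoir'' $T$ together with the invariant that every already-chosen part is joined to all of $T$ by a monochromatic complete bipartite graph (the color may depend on the part) and that the chosen parts are pairwise homogeneous; nesting the reservoirs then guarantees that a newly extracted part, living inside the current $T$, is automatically homogeneous to all earlier parts. A single extraction step works inside the current reservoir as follows: pick a color $c$ not yet realized, apply the K\H{o}v\'ari--S\'os--Tur\'an bound \cite{KoSoTu} to the color-$c$ graph to find a monochromatic $K_{a,M}$ with $a$ a constant (large enough that Ramsey's theorem finds a monochromatic $K_t$ inside the $a$-side) and $M$ still polynomially large, let the $a$-side, trimmed by Ramsey to a monochromatic $K_t$, be the new part, and take the new reservoir to be a carefully chosen subset of the $M$-side. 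After $r-1$ such steps I would finish with one ordinary K\H{o}v\'ari--S\'os--Tur\'an application realizing the last missing color as a single monochromatic bipartite pair inside the final reservoir.

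The exponent bookkeeping is what produces $t'=tr^{r-1}$. Each extraction forces a passage from $T$ to a common color-$c$ neighborhood, and the usable density exponent of the surviving colors degrades; arranging each step to cost a factor of $r$ in the relevant size parameter means that starting the iteration with parameter $t'=tr^{r-1}$ leaves parameter $\ge t$ after the $r-1$ expensive steps, exactly enough to run the final clean application. The constant-size Ramsey arguments that turn homogeneously-connected sets into monochromatic $K_t$ parts are essentially free, since the reservoir remains polynomially large throughout.

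The main obstacle is the tension, in each extraction step, between homogeneity and the survival of the not-yet-used colors. Forcing the new part to be monochromatically joined to the reservoir shrinks $T$ to a common neighborhood in color $c$, and a naive choice can annihilate a different color $c'$ whose edges happen to avoid that neighborhood; since $c'$ is only guaranteed its $\Omega(n^{2-1/t'})$ edges, it can be hidden entirely. The way around this is to select the common neighborhood by \emph{dependent random choice} rather than greedily: taking $T$ to be the common color-$c$ neighborhood of a randomly sampled small set, one can guarantee that $T$ is both large and inherits a positive proportion of every other color's edges, because in a dense color-$c$ graph almost every pair of vertices has many common $c$-neighbors. Making this preservation quantitative, losing only a factor of $r$ in the exponent per step while keeping all $r-1$ remaining colors simultaneously dense, is the technical heart of the argument, and is precisely where the bound $t'=tr^{r-1}$ is paid for.
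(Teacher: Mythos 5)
Your reduction to producing a homogeneous blow-up realizing all $r$ colors (and then pruning parts until condition (3) holds) is sound and matches how the paper finishes its argument. The construction of that blow-up, however, has two genuine gaps. First, the extraction step is impossible as stated: you ask for a monochromatic $K_{a,M}$ with $a$ at least the $r$-color Ramsey number of $K_t$, but the K\H{o}v\'ari--S\'os--Tur\'an double count produces a $K_{a,M}$ in a graph with $Cn^{2-1/t'}$ edges only for $M \lesssim (2C)^a n^{1-a/t'}$; this is polynomially large only when $a<t'$, is merely a constant when $a=t'$, and gives nothing when $a>t'$ (indeed, for $a>2t'$ a graph of this density need not contain a single $K_{a,a}$). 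Since Ramsey numbers grow exponentially in $t$ while $t'=tr^{r-1}$ is linear in $t$, the requirement that $a$ exceed the Ramsey number is incompatible with $a\le t'$ for all large $t$. This is precisely why the proof must use dependent random choice (Lemma \ref{lemma:drc}) rather than K\H{o}v\'ari--S\'os--Tur\'an: DRC supplies a set of \emph{any} prescribed constant size, large enough to run Ramsey-type arguments on, in which every $t'$-subset (not the whole set) has a large common neighborhood.

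Second, and more fundamentally, the sequential reservoir scheme cannot keep the unused colors alive. At density $n^{2-1/t'}$ the common neighborhood of a $t'$-set is only guaranteed to have constant size, and even a linear-sized reservoir obtained as a common neighborhood in color $c$ may contain none of the $\Theta(n^{2-1/t'})$ edges of another color $c'$; no version of dependent random choice controls where the edges of a \emph{different} sparse color land. Your appeal to "a dense color-$c$ graph" is exactly where this breaks: the color classes here are sparse. This obstruction is the one the paper isolates in Section \ref{sec:improvements} as the reason Conjecture \ref{mainconjecture} remains open. The actual proof sidesteps it by applying DRC to all $r$ color classes in parallel on the full vertex set at the outset, obtaining sets $S_1,\dots,S_r$, and then witnessing color $i$ through the bipartite graph between $X_i\subseteq S_i$ and its color-$i$ common neighborhood $Y_i$, so that no sparse color ever has to survive a passage into somebody else's neighborhood. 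Relatedly, your bookkeeping for $t'=tr^{r-1}$ ("each step costs a factor of $r$") does not correspond to a concrete mechanism; in the paper the factor $r^{r-1}$ arises from a single pigeonhole over the $(r-1)$-tuples recording, for each vertex of $X_i$, the colors of its edges to the $r-1$ sets $Y'_j$ with $j\ne i$.
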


Observe that the result of Gir\~ao and Narayanan \cite{girao} does not follow from the above by setting $r=2$. We discuss why the existing techniques fail to give optimal bounds for the above problem, how one could potentially remove the $r^{r-1}$ factor, and why the case of $r=2$ is much easier, in Section \ref{sec:generalbounds}. In particular, we give a short proof of the result of Gir\~ao and Narayanan in Section~\ref{sec:shorterproof}.

\subsection{Balanceable graphs}
Next, we turn our attention to a more specific problem, concerning balanced colorings of paths.  
We say that an $r$-colored graph $G$ is \emph{balanced} if each of the $r$ colors appears in precisely $\lfloor e(G)/r \rfloor$ or $\lceil e(G)/r \rceil$ edges. Further, we say that an $r$-coloring of $E(K_n)$ contains a \emph{balanced copy} of $G$ if it admits a balanced embedding of $G$. The following parameter for the case $r=2$ was introduced by Caro and the second and third author \cite{caro}.

\begin{definition}\label{def:bal_r} Given a graph $G$ and positive integers $r$ and $n$,  we set $\bal_r(n, G)=\ex_r(K_n, \mathcal{F}_{\rm bal}(G))$,  where $\mathcal{F}_{\rm bal}(G)$ is the family of all $r$-colored copies of $G$ in which each of the $r$ colors appears in either $\lfloor e(G)/r \rfloor$ or $\lceil e(G)/r \rceil$ edges.  We call $\bal_r(n, G)$ the \emph{$r$-balancing number} of $G$ and, when $r=2$, we will put $\bal_2(n, G) = \bal(n,G)$ and call it just the \emph{balancing number} of $G$.  A graph $G$ with $\bal_r(n, G) < \infty$ for every sufficiently large $n$ is called \emph{$r$-balanceable} or, when $r=2$, simply \emph{balanceable}.
\end{definition}

Caro et al. \cite{caro} characterized all balanceable graphs. It follows by this characterization that all balanceable graphs $G$ have $\bal(n, G)=o(n^2)$.  There are many non-balanceable graphs as well as many balanceable graphs, see \cite{caro, DEHV, DHV} for several examples.  There are many dense balanceable graphs (like some amoebas \cite{CHM3, caro}) and also many graphs having linear balancing number (in $n$), like trees \cite{caro},  and cycles $C_n$ with $n \not\equiv 2 \;({\rm mod}\; 4)$  \cite{DEHV}.  For instance, $\bal(n,P_{2k}) = \left(\left\lfloor\frac{k-1}{2} \right\rfloor+ o(1) \right) n$, and the value was precisely determined in \cite{caro}. Surprisingly, it turns out to be an intricate problem to determine even the order of magnitude of $\bal_r(n, P_{rk})$ for arbitrary $r$ and $k$, as the next two results show.

\begin{theorem}\label{negative}
Let $k$ be an odd natural. Then there exist infinitely many $r$ such that $\bal_r(n,P_{rk})=\Omega(n^2)$. In particular, $\bal_7(n,P_{7k})=\Omega(n^2)$.
\end{theorem}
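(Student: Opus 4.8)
The plan is to exhibit, for suitable $r$ and every odd $k$, an explicit $r$-edge-coloring of $K_n$ in which each color class has $\Omega(n^2)$ edges yet no copy of $P_{rk}$ is balanced; this gives $\bal_r(n,P_{rk})=\Omega(n^2)$. I would build the coloring as a blow-up of a small ``reduced'' colored graph. Partition $V(K_n)$ into a bounded number of parts of size $\Theta(n)$ and color each edge according only to the unordered pair of parts containing its endpoints. Since a path of bounded length can always be routed inside parts of linear size, any copy of $P_{rk}$ projects to a walk of length $rk$ in the reduced graph, and the multiset of colors on the path equals that on the walk. Hence it suffices to color the reduced graph so that no length-$rk$ walk uses every color exactly $k$ times.

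To control the colors along a walk I would use a coboundary (gain-graph) invariant over $A=\mathbb{Z}_2^m$. Index the parts by a set $\Phi\subseteq \mathbb{Z}_2^m$ of ``potentials'', index the colors by the elements of the difference set $\Phi+\Phi$, and color the edge between parts with potentials $u,v$ by the color $u+v\in\mathbb{Z}_2^m$ (within-part edges, where $u=v$, receive the color $0$). Assigning each color $c$ the weight $\psi(c)\in\mathbb{Z}_2^m$ equal to its index, the weight of every edge equals the difference of the potentials of its endpoints, so along a walk $x_0,\dots,x_{rk}$ the total weight telescopes to $\phi(x_0)+\phi(x_{rk})$, a quantity lying in $\Phi+\Phi$ and depending only on the endpoints. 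On the other hand, a balanced path uses each color exactly $k$ times, so its total weight equals $k\sum_{c}\psi(c)=\sum_c\psi(c)$ in $\mathbb{Z}_2^m$, using that $k$ is odd. Thus a balanced $P_{rk}$ can exist only if $\sum_c\psi(c)\in\Phi+\Phi$, and the construction succeeds precisely when we force $\sum_c\psi(c)\notin\Phi+\Phi$.

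I would realize this with a cap-like choice of potentials. Take $\Phi=\{0,e_1,\dots,e_m\}$, the vectors of Hamming weight at most $1$, so that the colors are indexed by $\Phi+\Phi=\{v:\mathrm{wt}(v)\le 2\}$ and $r=|\Phi+\Phi|=1+\binom{m+1}{2}$. A direct computation shows that each coordinate of $\sum_{\mathrm{wt}(v)\le 2}v$ equals $m \bmod 2$, so $\sum_c\psi(c)=(m\bmod 2)\,\mathbf{1}$; for odd $m\ge 3$ this is the all-ones vector, of weight $m\ge 3$, which is not in $\Phi+\Phi$. This yields the obstruction for every $r=1+\binom{m+1}{2}$ with $m$ odd and $m\ge3$, an infinite family whose smallest member is $m=3$, $r=7$. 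Finally one checks density: there are $m+1$ parts of size $n/(m+1)$, the weight-$0$ color occupies all $\Theta(n^2)$ within-part edges, and the $\binom{m+1}{2}$ between-part pairs realize the $\binom{m+1}{2}$ nonzero colors bijectively, each on $\Theta(n^2)$ edges, so every color class has $\Omega(n^2)$ edges.

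The main obstacle is engineering $\sum_c\psi(c)\notin\Phi+\Phi$. If $\Phi+\Phi$ were a subgroup (for instance if $\Phi$ were an affine subspace), then every color weight, and hence their sum, would lie in $\Phi+\Phi$ and the argument would collapse; this is exactly why a cap-like $\Phi$, for which $\Phi+\Phi$ is a proper non-subgroup subset of $\langle\Phi\rangle$, is essential. The second delicate point is the interaction of two parities: the telescoping identity lives in $\mathbb{Z}_2^m$, the reduction $k\sum_c\psi(c)=\sum_c\psi(c)$ requires $k$ odd (matching the hypothesis), and whether $\sum_c\psi(c)$ escapes $\Phi+\Phi$ hinges on the parity of $m$. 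Handling the within-part edges correctly, so that they carry the weight-$0$ color and respect the invariant rather than breaking it, is the routine but necessary bookkeeping that makes the telescoping exact.
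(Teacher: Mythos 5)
Your proposal is correct and uses exactly the paper's construction: the same blow-up into $m+1=l$ parts with one distinct color per between-part pair and a single color on all within-part edges, for the same infinite family $r=1+\binom{l}{2}$ with $l$ even, $l\ge 4$. Your $\mathbb{Z}_2^m$ telescoping invariant is an algebraic repackaging of the paper's obstruction — reading it coordinatewise, it says precisely that each part not containing an endpoint of the path has even crossing degree while balancedness forces degree $km$ (odd for $k,m$ odd), which is the Eulerian-trail degree-parity argument the paper gives — so this is essentially the same proof.
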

We would expect that, in this scenario, $\bal_r(n,P_{rk})$ is in fact $\infty$, but this would require giving a construction with exactly the same number of edges from each color class. On the other hand, we have the following positive result:
\begin{theorem}\label{positive}
Let $r\geq 2$. Then, there exists a $k_0$ such that for all $k\geq k_0$, $\bal_r(n, P_{2rk})=o(n^2)$.
\end{theorem}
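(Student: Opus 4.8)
The plan is to deduce Theorem~\ref{positive} from the Tur\'an-type bound of Theorem~\ref{thm:extremalmulticolorbollobas}, reducing it to a self-contained combinatorial statement about members of $\mathcal{F}_t^r$. Fix $r$ and set $t := 2rk+1$. With the threshold $Cn^{2-1/t'}$ and $t' = tr^{r-1}$ from Theorem~\ref{thm:extremalmulticolorbollobas}, any $r$-coloring of $K_n$ with more than $Cn^{2-1/t'}$ edges in each color contains a member $H \in \mathcal{F}_t^r$; since this threshold is $o(n^2)$, it suffices to prove the following claim: for $k \ge k_0 := 2r$ and $t \ge 2rk+1$, every $H \in \mathcal{F}_t^r$ contains a balanced copy of $P_{2rk}$, that is, a path with exactly $2k$ edges of each color (note $e(P_{2rk})/r = 2k$, so balanced means exactly $2k$ of each).

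To prove the claim I would pass to the reduced (multi)graph $R$ on the vertex set $[p]$ of the parts of $H$, where $p \le 2r$, recording on the pair $(i,j)$ (including loops $i=j$) the common color $c(i,j)$ of the monochromatic bipartite graph $H[V_i, V_j]$, respectively the clique $H[V_i]$, guaranteed by condition~(1); since $H$ is complete, $R$ is the complete graph on $[p]$ with all loops present. A closed walk in $R$ that uses the reduced edge $(i,j)$ with multiplicity $m_{ij}$ lifts, by assigning a fresh vertex of the appropriate part at each step, to a \emph{simple} path in $H$ with $\sum_{ij} m_{ij}$ edges, provided no part is visited more times than it has vertices. Our walk will have exactly $2rk$ edges and hence use at most $2rk+1 \le t$ vertices in total, so this realizability is automatic. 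Thus it is enough to construct an Eulerian multigraph on $[p]$, with each reduced edge colored by $c$, that is connected and in which every color has total multiplicity exactly $2k$.

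I would build this multigraph in two stages. First, take a spanning tree $T$ of $R$ and give every tree edge multiplicity $2$; this connects all parts and contributes even degree everywhere, using color $\ell$ exactly $2a_\ell$ times, where $a_\ell \le p-1 \le 2r-1 \le k$ is the number of tree edges of color $\ell$. Second, for each color $\ell$ pick one reduced edge or loop of that color (one exists by condition~(2), as all $r$ colors appear in $H$, and its endpoints already lie in the connected tree) and increase its multiplicity by the even amount $2(k-a_\ell) \ge 0$: a loop absorbs any even surplus while preserving all degree parities, and an ordinary edge absorbs an even surplus via ``there-and-back'' traversals, again preserving parities and connectivity. The result is a connected multigraph with all degrees even and each color of total multiplicity exactly $2k$, hence it has an Eulerian circuit, which lifts as above to the desired balanced $P_{2rk}$.

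The crux of the argument — and the reason the theorem concerns $P_{2rk}$ rather than $P_{rk}$ — is this second stage: one can simultaneously keep the multigraph connected \emph{and} all degrees even while hitting the exact color counts only because each target $2k$ is even, which lets every color's surplus be inserted in parity-neutral units (loops, or round trips). For odd targets this parity bookkeeping genuinely breaks down, consistent with the $\Omega(n^2)$ lower bound of Theorem~\ref{negative}. I expect the only routine verifications to be that the doubled spanning tree plus the top-ups never overshoot the budget $2k$ per color (ensured by $k \ge 2r$) and that the fresh-vertex lift respects each part's size $t$ (ensured by $t \ge 2rk+1$).
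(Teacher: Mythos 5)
Your argument is correct, and its skeleton is the paper's: invoke Theorem~\ref{thm:extremalmulticolorbollobas} to land inside a member of $\mathcal{F}_t^r$, pass to an auxiliary multigraph on the parts in which every degree is even and every color has total multiplicity $2k$, take an Eulerian circuit, and lift it to a simple path by spending a fresh vertex of the relevant part at each step. The difference lies in how the multiplicities are chosen, and your decomposition is genuinely different there. The paper first discards the colors appearing only inside cliques (a separate claim inserts a monochromatic subpath of such a color within its clique) and then, for each remaining color $c_i$ appearing on $\#c_i$ of the bipartite graphs, places the uniform even multiplicity $2k_0'/\#c_i$ on each such reduced edge, where $k_0'$ is the least common multiple of the $\#c_i$; this makes $k_0$ depend on divisibility data ranging over all patterns in $\mathcal{F}^r_{10rk}$ and, as written, produces $2k_0'$ edges of each color rather than $2k$, so it hits the exact count only for suitable multiples of $k_0'$. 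Your doubled-spanning-tree-plus-even-top-up scheme, with loops retained in the reduced graph so that clique-only colors need no special treatment, hits the target $2k$ exactly for every $k\ge 2r$, yields the explicit and pattern-independent bound $k_0=2r$, and isolates the one thing that can genuinely fail --- the parity of the per-color count --- consistently with Theorem~\ref{negative}. In short: same Eulerian-lift strategy, but a cleaner and quantitatively sharper realization of the multiplicity bookkeeping.
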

We remark that while the dependence of $k_0$ on $r$ our proof gives is possibly far from optimal, some threshold is necessary. Indeed, there exist members of $ \mathcal{F}_t^r$ for every $r$ where one color class separates all the remaining colors, thus making it impossible to embed a balanced $P_{r}$ (a rainbow path) in such graphs. As an example of such a member from  $\mathcal{F}_t^r$,  one may start with $r-1$ vertex-disjoint $K_{n,n}$, each colored a distinct color, and color all the edges that remain in the $r^{th}$ color. An illustration can be found for $r=3$ in the bottom-right part of Figure \ref{fig:F3}.

\par Having established that degeneracies arise for large $r$, we turn our attention to the function $\bal_3(n, P_{3k})$. The following theorem establishes the growth of this function up to a constant factor of $2$. 

\begin{theorem}\label{thm:generalbounds}
For $k \ge 1$, $(\frac{1}{2}(k-1)+ o(1)) n \le \bal_3(n,P_{3k}) \le (k+ o(1))n $.
\end{theorem}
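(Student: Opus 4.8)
The plan is to prove the two bounds separately, as they require quite different ideas.

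For the \textbf{lower bound} $\bal_3(n,P_{3k}) \ge (\frac{1}{2}(k-1)+o(1))n$, I would exhibit a $3$-edge-coloring of $K_n$ with roughly $\frac{1}{2}(k-1)n$ edges in each color that contains no balanced $P_{3k}$. The key structural obstruction should be a coloring in which one color class is ``sparse'' enough that any path using it cannot spread that color across $k$ edges without being forced to leave the other color classes. A natural construction is to take a small vertex set $S$ of size about $\frac{1}{2}(k-1)$ and put two of the colors, say red and blue, only on edges incident to $S$ (so each of red and blue forms a union of stars centered in $S$), coloring everything else green. Since any path can pass through a vertex of $S$ only a bounded number of times, the number of red and blue edges on any path is at most $O(|S|)$. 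Tuning $|S| \approx \frac{1}{2}(k-1)$ forces the red or blue count to fall just short of $k$, so no balanced $P_{3k}$ (which needs exactly $k$ edges of each color) can appear. I would verify that each color class has the claimed edge count and, crucially, that any embedded path cannot accumulate $k$ edges of the star-colors.

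For the \textbf{upper bound} $\bal_3(n,P_{3k}) \le (k+o(1))n$, I would argue that if every color class has more than $(k+o(1))n$ edges, then a balanced $P_{3k}$ must exist. The natural approach is to build the path greedily while maintaining a balance invariant. First I would use the density hypothesis to guarantee that the minimum degree in each color, after deleting low-degree vertices, is large (linear in $n$); having $(k+o(1))n$ edges per color means one can pass to a subgraph where each vertex has red-, blue-, and green-degree all of order at least, say, $2k$. Then I would construct the path in segments, alternating or interleaving colors so that after completing the path we have used exactly $k$ edges of each color. At each step, the large monochromatic degrees let me extend the current endpoint by an edge of whichever color is currently ``behind'' in the balance count, always avoiding the $O(k)$ previously used vertices. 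Since $3k \le O(k) \ll n$, there is always room to extend.

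The \textbf{main obstacle} is the upper bound, specifically controlling the balance while extending. Greedy extension easily produces a long path, but forcing \emph{exactly} $k$ edges of each of the three colors requires that at the moment a color is needed, the current endpoint actually has an available edge of that color into the unused vertex set. A vertex could have high degree in red and green but essentially no blue edges to fresh vertices, which would stall the construction. To handle this I would first clean the graph so that every remaining vertex has linear degree in \emph{all three} colors simultaneously (discarding vertices deficient in any color and checking that the edge-count hypothesis survives this cleaning), and then interleave the three colors in a fixed repeating pattern so that no single color is ever required too many times in a row. The delicate point is arguing that cleaning to simultaneously-large degrees in all three colors is compatible with only $(k+o(1))n$ edges per color; I expect that a counting argument, bounding the number of vertices that can be deficient in any fixed color, will show that almost all vertices survive, after which the interleaved greedy extension goes through with room to spare.
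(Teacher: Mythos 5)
Your lower bound construction is the right idea but the parameters are off by a factor of $2$. If red and blue both live only on edges incident to a set $S$, then every red or blue edge of $K_n$ touches $S$, so together they number at most $|S|\,n$; to give \emph{each} of red and blue $(\tfrac{1}{2}(k-1)+o(1))n$ edges you need $|S|\ge k-1$, not $|S|\approx\tfrac{1}{2}(k-1)$. As written, your coloring certifies only $\bal_3(n,P_{3k})\ge(\tfrac{1}{4}(k-1)+o(1))n$. The correct choice is $|S|=k-1$: a path meets each vertex of $S$ in at most two edges, so it carries at most $2(k-1)<2k$ red-plus-blue edges and cannot be balanced, while splitting the edges at $S$ evenly between red and blue gives the required $\tfrac{1}{2}(k-1)n$ per class. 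This is exactly the paper's construction ($A$ of size $k-1$, red to one half of the rest, blue to the other half, green elsewhere).

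The upper bound proposal has a genuine gap: the cleaning step is impossible at density $(k+o(1))n$. A color class with $(k+\epsilon)n$ edges has average degree only $2k+2\epsilon$ and its edges may be concentrated on a vanishing fraction of the vertices --- for instance, all red edges could form a clique on $\Theta(\sqrt{kn})$ vertices, or could all be incident to a set of $O(k)$ vertices. In the first case the only vertices with red degree $\ge 2k$ are the $o(n)$ clique vertices, and the clique spans \emph{no} blue or green edges, so no vertex at all survives a demand of simultaneously large degree in all three colors; your counting argument cannot show that ``almost all vertices survive,'' because it is false. Consequently the greedy extension also fails: when the balance count calls for a red edge, a typical endpoint has no red edge whatsoever. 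At this density the problem is global --- the path must be routed through the few vertices carrying the sparse colors --- which is why the paper argues entirely differently: induction on $k$ with the base case $P_6$ handled by a separate case analysis; in the inductive step, Erd\H{o}s--Gallai is used to extract long monochromatic paths from the dense bipartite edge sets between the old path's vertex set $C$ and its complement $D$, the two-color balancing theorem supplies a balanced two-colored path inside $D$, a Gallai-coloring structure theorem handles the case where $D$ sees all three colors, and the monochromatic pieces are glued into a cycle from which one or two vertices are deleted to restore exact balance. None of these ingredients is replaceable by a local degree argument, so the upper bound as you outline it does not go through.
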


We believe that the lower bound from Theorem \ref{thm:generalbounds} should be tight for arbitrary $k$ but are only able to confirm this when $k \le 2$. When $k=1$, it is not hard to show $\bal_3(n,P_{3})=0$ (Proposition~\ref{prop:p3is0}), whereas for $k =2$ some more work is needed. We state the latter result in the following theorem.

\begin{theorem}\label{thm:p6}
  $\bal_3(n, P_{6})=(\frac{1}{2}+o(1))n$.
\end{theorem}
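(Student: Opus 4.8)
The plan is to prove $\bal_3(n, P_6) = (\frac{1}{2} + o(1))n$ by combining the general lower bound from Theorem~\ref{thm:generalbounds} (which at $k=2$ gives $\bal_3(n,P_6) \ge (\frac{1}{2} + o(1))n$) with a matching upper bound $\bal_3(n,P_6) \le (\frac{1}{2} + o(1))n$. So the entire content of this theorem lies in sharpening the upper bound of Theorem~\ref{thm:generalbounds}, which for $k=2$ only gives $(2+o(1))n$, down to $(\frac{1}{2}+o(1))n$. The goal is thus: show that any $3$-edge-coloring of $K_n$ in which every color class has more than $(\frac{1}{2}+o(1))n$ edges must contain a balanced $P_6$, i.e.\ a copy of $P_6$ using exactly two edges of each color.

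The approach I would take is a direct extremal argument. First I would fix a $3$-coloring of $K_n$ with each color appearing at least $(\frac12 + \ep)n$ times and assume, for contradiction, that no balanced $P_6$ exists. The natural first step is to understand the structure forced by having relatively few edges in each color: since $P_6$ has only $5$ edges, a balanced copy needs just a modest local configuration, so having many edges of each color should easily produce one unless the coloring is highly structured. I would begin by analyzing the monochromatic components in each color. The key quantitative input is that a color class with $m$ edges, if it is to avoid creating balanced paths, must concentrate its edges — for instance a color whose edges form a sparse subgraph with few vertices of positive degree, or whose edges are confined to a small vertex set, behaves very differently from a spread-out one. I would look at which vertices are incident to edges of two or three different colors, since a vertex seeing all three colors is exactly where one can begin threading a rainbow-balanced walk.

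The central mechanism I expect to need is a greedy/alternating embedding of $P_6$: to build a path $v_0 v_1 v_2 v_3 v_4 v_5$ whose five edges realize the multiset of colors $\{1,1,2,2,3\}$ (or a permutation giving two-two-one), I would walk from a vertex with high degree in one color into a region rich in the other colors, at each step choosing the next edge to respect the remaining color budget. The counting lemma I would aim for is that if each color has more than $(\frac12+\ep)n$ edges, then there are enough vertices of each color-degree type to chain together five edges with the right colors; the factor $\frac12$ arises because a color with exactly $\tfrac{n}{2}$ edges can be realized as a perfect matching, and a matching is precisely the extremal obstruction that avoids long monochromatic sub-paths while still using many edges. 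I would therefore treat the case where one or more color classes is essentially a matching (or a disjoint union of small stars) as the tight/extremal case and show that even then, once the count strictly exceeds $\tfrac{n}{2}$, two matching edges of one color plus two of another plus one of the third can always be interleaved into a $P_6$.

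The hard part will be the case analysis around the extremal matching configuration, where the bound is tight: I must show that the strict inequality $(\frac12+\ep)n$ is enough to break out of the matching obstruction for \emph{every} color simultaneously, ruling out a coloring where all three color classes are near-perfect matchings that happen to be vertex-aligned so that no balanced $P_6$ threads through them. I expect this to require a careful argument that three near-matchings on a common vertex set cannot all avoid contributing a balanced $P_6$ once their total edge count forces overlap in the support, together with a robust handling of the $o(n)$ error terms so that the lower-order vertices (those of small degree in some color) do not obstruct the embedding. Once the extremal configuration is pinned down and shown to admit a balanced $P_6$ above the threshold, the matching upper bound follows and, combined with Theorem~\ref{thm:generalbounds}, yields $\bal_3(n,P_6) = (\tfrac12 + o(1))n$.
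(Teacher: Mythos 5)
Your high-level framing is fine (the content is the upper bound; the lower bound is the $k=2$ case of the construction for Theorem~\ref{thm:generalbounds}), but there is a genuine conceptual error at the heart of your plan: you have misidentified the extremal obstruction. You propose to treat near-perfect matchings as the tight configuration, on the grounds that a matching with $\sim n/2$ edges avoids long monochromatic subpaths. But a color class that is a matching is not an obstruction to a balanced $P_6$ at all --- a balanced $P_6$ only needs two edges of each of two colors and one of the third, and two \emph{disjoint} edges of one color are perfectly usable (they just occupy non-adjacent positions on the path). The actual extremal example, which is the $k=2$ instance of the construction giving (\ref{eq:lb}), is the opposite kind of concentration: take a single vertex $v$, color the edges from $v$ to one half of the remaining vertices red and to the other half blue, and color everything else green. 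Then red and blue each have $\approx n/2$ edges, but every red and every blue edge is incident to $v$, so any path contains at most two of them combined, while a balanced $P_6$ needs four. A case analysis organized around "which colors are near-matchings" will never encounter this double-star configuration, so your proof of the upper bound would not close at the threshold $(\frac12+\ep)n$; you would at best prove a statement whose extremal case you cannot certify.

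For comparison, the paper's proof does not attempt a direct greedy embedding. It takes a maximum family of vertex-disjoint balanced $P_3$s (which exist by Proposition~\ref{prop:p3is0}, since $\bal_3(n,P_3)=0$), lets $I$ be their vertex set and $D$ the rest, and notes that some color is absent from $D$ by maximality. Two structural lemmas drive everything: if two disjoint balanced $P_3$s have middle edges of different colors one immediately gets a balanced $P_6$ (Lemma~\ref{lem:1}), and if they have the same middle color then, absent a balanced $P_6$, \emph{all} edges spanned by those eight vertices must be that color (Lemma~\ref{lem:2}). This forces the rigid "two colors concentrated near few vertices" structure directly, after which counting the edges of the scarce colors against the $(\frac12+\ep)n$ hypothesis produces either a contradiction or an explicit balanced $P_6$. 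If you want to salvage your approach, you would need to replace the matching dichotomy with one based on whether the red and blue edges between the path-free part and the rest are concentrated on a single vertex, which is essentially what the paper's Subcase~1.1 does.
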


\subsection{Organization of the paper}
We begin by proving our general result, Theorem \ref{thm:extremalmulticolorbollobas}. This already implies $\bal_3(P_{3k})=o(n^2)$, as one can check all members from $\mathcal{F}_t^3$ in Figure \ref{fig:F3} admit balanced embeddings of $P_{3k}$. Afterwards, we prove much better bounds on this function, showing that in fact it grows linearly in $n$ (Theorems \ref{thm:generalbounds} and \ref{thm:p6}). We proceed by proving Theorems \ref{negative} and \ref{positive}, which summarize our understanding of the function $\bal_r(n, P_{rk})$ for arbitrary $r$. We conclude with some open problems in the Discussion section. 
\section{Proof of Theorem \ref{thm:extremalmulticolorbollobas}}\label{sec:generalbounds}
We will use the following version of the dependent random choice lemma.

\begin{lemma}[\cite{DRC}]\label{lemma:drc}
For all $K,t\in\mathbb{N}$, there exists a constant $C$ such that any graph with at least $Cn^{2-1/t}$ edges contains a set $S$ of $K$ vertices in which each subset $X\subseteq S$ with $t$ vertices has a common neighborhood of size at least $K$. 
\end{lemma}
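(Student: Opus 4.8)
The plan is to prove the lemma by the probabilistic method in the form that gives it its name: sample a small random ``seed'' set of vertices, pass to its common neighborhood, and argue that this neighborhood is simultaneously large and almost free of $t$-subsets with few common neighbors. Concretely, write $d = 2e(G)/n$ for the average degree, so that the hypothesis $e(G) \ge Cn^{2-1/t}$ gives $d \ge 2Cn^{1-1/t}$. I would sample $t$ vertices $v_1,\dots,v_t$ independently and uniformly at random (with repetition allowed), and set $A = N(v_1)\cap\cdots\cap N(v_t)$, their common neighborhood. The crucial design choice is that the seed size is taken to be exactly $t$; this is what forces the powers of $n$ to cancel later.

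First I would lower bound $\expected[|A|]$. A vertex $u$ lies in $A$ precisely when every sampled vertex is a neighbor of $u$, which occurs with probability $(d(u)/n)^t$, so $\expected[|A|] = \sum_u (d(u)/n)^t = n^{-t}\sum_u d(u)^t$. By convexity of $x \mapsto x^t$ (Jensen's inequality applied to the degree sequence), $\sum_u d(u)^t \ge n\,d^t$, whence $\expected[|A|] \ge d^t/n^{t-1} \ge (2C)^t$, a constant independent of $n$ since $t(1-1/t) = t-1$. Next I would bound the ``bad'' subsets. Call a $t$-subset $Y$ bad if it has fewer than $K$ common neighbors, and let $Z$ count the bad $t$-subsets contained in $A$. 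A fixed bad $Y$ satisfies $Y \subseteq A$ only if all $t$ sampled vertices land in the common neighborhood of $Y$, which has size less than $K$; hence $\mathbb{P}[Y \subseteq A] < (K/n)^t$, and summing over the at most $\binom{n}{t} \le n^t/t!$ choices of $Y$ gives $\expected[Z] < K^t/t!$.

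Combining these, $\expected[|A| - Z] \ge (2C)^t - K^t/t!$, so some outcome of the sampling yields a common neighborhood $A$ with $|A| - Z \ge (2C)^t - K^t/t!$. Deleting one vertex from each bad $t$-subset removes at most $Z$ vertices and destroys every bad subset, leaving a set $S \subseteq A$ with $|S| \ge (2C)^t - K^t/t!$ in which every $t$-subset has at least $K$ common neighbors; this last property is inherited by every subset of $S$, so it suffices to produce $|S| \ge K$ and then discard vertices down to exactly $K$. The final step is purely a choice of constant: taking $C = C(K,t)$ large enough that $(2C)^t \ge K + K^t/t!$ guarantees $|S| \ge K$. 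The only real subtlety, and the crux of the whole argument, is this cancellation of the powers of $n$: matching the seed size to $t$ converts the superlinear edge count $n^{2-1/t}$ into a common neighborhood of constant expected size while the bad term $\expected[Z]$ stays bounded, whereas any other seed size would either make $\expected[|A|]$ tend to $0$ or let $\expected[Z]$ blow up.
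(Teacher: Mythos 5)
Your proof is correct and is precisely the standard dependent random choice argument from the cited reference \cite{DRC} (sample $t$ random vertices with repetition, bound $\expected[|A|]$ by convexity, bound the expected number of bad $t$-subsets, and delete a vertex from each), which is the proof the paper implicitly relies on, since it cites the lemma rather than proving it. All the estimates check out, including the key cancellation $\expected[|A|]\ge d^t/n^{t-1}\ge (2C)^t$ and $\expected[Z]<K^t/t!$, so choosing $C$ with $(2C)^t\ge K+K^t/t!$ does the job.
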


\par We will also use the following, which is essentially a consequence of iterating a bipartite version of the dependent random choice lemma from \cite{multicolorbollobas}.

\begin{lemma}[Corollary 2.6 in \cite{multicolorbollobas}]\label{lemma:superdrc}
Let $r$ and $t$ be positive integers, $r\geq 2$. There exists $N=N(r,t)$ such that the following holds for all $n\geq N$. Let $A_1, A_2, \dots, A_t$ partition the vertex set of an $r$-colored complete graph, where $|A_i|=n$ for all $i\in[t]$. Then, there exist subsets $X_i\subset A_i$, of size $|X_i|=\frac{1}{2^{t+1}r}\log_rn$, such that every set $X_i$ is monochromatic and every complete bipartite graph between $X_i$ and $X_j$ is monochromatic.
\end{lemma}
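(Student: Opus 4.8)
The plan is to build the sets $X_1,\dots,X_t$ one part at a time, in $t$ rounds, maintaining the invariant that after round $i$ we have chosen monochromatic cliques $X_1,\dots,X_i$ together with surviving sets $B_{i+1},\dots,B_t$ (with $B_j\subseteq A_j$) of a controlled size $m_i$, such that each already-chosen $X_a$ ($a\le i$) is joined to all of every surviving $B_j$ ($j>i$) by a monochromatic complete bipartite graph. This last condition is the crucial one: it guarantees that no matter which subsets we later select inside the surviving parts, they will automatically connect monochromatically to the earlier $X_a$'s, so the between-part requirement for pairs $(a,b)$ with $a\le i<b$ is secured in advance. Initially all $B_j=A_j$ and $m_0=n$; round $i$ produces $X_i$ inside the current $B_i$, so that all $\binom{t}{2}$ between-pairs and all $t$ within-cliques are eventually handled.

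For the step from round $i$ to round $i+1$ I would first truncate every surviving set to exactly $m_i$ vertices. For each $v\in B_{i+1}$ and each $j>i+1$, a majority color appears on at least $m_i/r$ of the edges from $v$ to $B_j$; recording for each $v$ the tuple of these majority colors gives a map $B_{i+1}\to[r]^{\,t-i-1}$, so by pigeonhole a subset $B^*\subseteq B_{i+1}$ of size at least $m_i/r^t$ shares one tuple $(c_{i+2},\dots,c_t)$. Every vertex of $B^*$ then has color-$c_j$ degree at least $m_i/r$ into each $B_j$, so each of these color classes is a bipartite graph of density at least $1/r$ between $B^*$ and $B_j$. I would next apply a \emph{multi-target} bipartite version of dependent random choice (in the spirit of Lemma~\ref{lemma:drc}, obtained by drawing a common random test sample from every $B_j$ at once and union-bounding over the targets) to produce a set $Y\subseteq B^*$ of size polynomial in $m_i$ such that \emph{every} $s$-element subset of $Y$ has, in each $B_j$, a common color-$c_j$ neighborhood of size at least $m_{i+1}\approx m_i^{1/2}$, with $s=\frac{1}{2^{t+1}r}\log_r n$. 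Finally, since $|Y|$ is polynomial in $m_i$, the multicolor Ramsey theorem yields a monochromatic clique $X_{i+1}\subseteq Y$ with $|X_{i+1}|=s$; being an $s$-subset of $Y$ it inherits the common-neighborhood property, so I can restrict each $B_j$ to the color-$c_j$ common neighborhood of $X_{i+1}$ and close the round with the invariant intact. The key trick here is to run dependent random choice \emph{first} (obtaining the ``all $s$-subsets'' guarantee) and only then extract $X_{i+1}$ by Ramsey, which frees me to make $X_{i+1}$ monochromatic without forfeiting its common neighborhoods.

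The reason only a single logarithm is lost overall, matching the stated size rather than an iterated logarithm, is that the expensive Ramsey extraction is performed once per round and always inside a set that is still polynomially large, while the between-part work proceeds through common neighborhoods (shrinking a surviving part only from $m_i$ to about $m_i^{1/2}$) instead of through bipartite Ramsey (which would collapse it to logarithmic size). Over the $t$ rounds the surviving size decays as $n\to n^{1/2}\to\cdots\to n^{1/2^t}$, so the monochromatic clique one can afford in the final, smallest round has size $\Omega\!\left(\frac{1}{r}\log_r(n^{1/2^t})\right)$; up to the constant factors lost in the pigeonhole and density steps, taking the common value $s=\frac{1}{2^{t+1}r}\log_r n$ stays safely below every per-round threshold, and earlier rounds comfortably support a clique of this size since their parts are larger.

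The main obstacle will be the multi-target dependent random choice step: arranging for one random test sample to certify simultaneously, for \emph{all} remaining parts, that every $s$-subset of $Y$ has a large common neighborhood, while keeping both $|Y|$ and the resulting common neighborhoods polynomial in $m_i$ so that the recursion loses only a bounded power of $n$ per round. The delicate points are checking that after the dominant-color pigeonholing the relevant densities stay bounded below by a constant depending only on $r$ and $t$ (so the base of the logarithm remains $\approx r$), and tuning the sampling parameters so that the guaranteed clique size can be pushed to $\Theta(\log_r n)$ without letting the expected number of ``bad'' subsets overwhelm the expected size of $Y$.
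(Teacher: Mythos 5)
The paper does not prove this lemma itself---it is imported as Corollary~2.6 of \cite{multicolorbollobas}, which the authors describe as ``essentially a consequence of iterating a bipartite version of the dependent random choice lemma''---and your proposal is essentially that same argument: pigeonhole onto majority colors, a bipartite/multi-target dependent random choice step giving the ``every $s$-subset has a large common neighborhood'' property \emph{before} the Ramsey extraction of the monochromatic clique, with the surviving parts decaying as $n^{1/2^i}$, which is exactly where the $2^{t+1}$ in the stated size comes from. Your parameter accounting also checks out: the rounds facing many remaining targets are the early ones, where the budget $\log_r m_i = 2^{-i}\log_r n$ is exponentially larger, so the per-target cost $r^{-\Theta(s)}$ and the multicolor Ramsey requirement (a set of size about $r^{rs}$) both fit comfortably for $s=\frac{1}{2^{t+1}r}\log_r n$ in every round.
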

We emphasize that the proof structure of the result in this section will be very similar to that of the main result from \cite{multicolorbollobas}. We include the details for completeness, and to set up the scene for a discussion of the difficulty of removing the $r^r$ term in the exponent. 

\begin{proof}[Proof of Theorem \ref{thm:extremalmulticolorbollobas}]
Start with given integers $r\geq 2$, $t\geq 1$ and set $t'=tr^{r-1}$.  We want to show that there exists a constant $C:=C(r,t)$ such that, for $n$ large enough,  $\ex_r(K_n, \mathcal{F}_t^r) \le Cn^{2-1/t'}$.
We choose a $C$ with the benefit of hindsight, large enough to make sure the following calculations go through. Similarly, we choose a sufficiently large $n$, and consider an $r$-coloring of a $K_n$ with $Cn^{2-1/t'}$ edges in each color class. 
\par First, we choose a $K'$ (with hindsight) such that $K'\gg t'$, and apply,for each color class, Lemma~\ref{lemma:drc} with $t=t'$ and $K=rK'$ to find a collection of  $rK'$-sized sets $\{S_i\}^r_{i=1}$, satisfying that all its $t'$-subsets have $rK'$ common neighbors in color $i$ (this can be done if $C$ is large enough). The different sets $S_1, S_2,\cdots,S_r$ can intersect,  but we can manage to choose,  without renaming, suitable subsets of each one in order to have $r$ disjoint sets each with $|S_i|= K'$.
\par We now apply Lemma \ref{lemma:superdrc} to the collection $\{S_i\}^r_{i=1}$, obtaining sets $\{X_i\}^r_{i=1}$ such that, for each $i\in[r]$, $X_i\subset S_i$, $|X_i|=\frac{1}{2^{t'+1}r}\log_r(K')$,   $X_i$ induces a monochromatic clique (of some color, not necessarily $i$), and the complete bipartite graph connecting vertices between  $X_i$ and $X_{j}$, with $i\neq j$, is  monochromatic. Here, we make sure to select $K'$  large enough to ensure that  $|X_i|$ exceeds $t'$.
\par Now, we use the property of the sets $S_i$ to find $rK'$ common neighbors of $X_i$ in color $i$. Call these sets  $Y_i$ and, again, choose suitable subsets (without renaming) to ensure that we get a collection $\{Y_i\}^r_{i=1}$ of disjoint sets, where $|Y_i|=K'$ for each $i\in[r]$.
\par Our aim is now to find monochromatic complete bipartite graphs between $Y_i$ and $X_{j}$ for all  $i\neq j$. 
Associate to each vertex $y$ of $Y_i$ a $t'r$-long tuple with entries in $[r]$ encoding the color of the edges $(y, x)$ where $x\in \bigcup X_i$ (recall  $|\bigcup X_i|=t'r$). As there are at most $r^{t'r}$ such tuples, there must be $|Y_i|/r^{t'r}$ vertices in each $Y_i$ (call them $Y'_i$) such that the associated tuple is identical. This means that for each $x\in X_{j}$ all the edges  form $x$ to vertices in $Y'_i$, are of the same color.

\par We may now fix subsets $X'_i\subseteq X_i$ of size $|X_i|/r^{r-1}$ so that the edges between $Y'_{j}$ and $X'_{i}$, for $i\neq j$, are monochromatic (recall that the graphs are already monochromatic when $i=j$). Indeed, associate to each vertex $x$ of $X_i$ an $(r-1)$-long tuple with entries in $[r]$ encoding the color of the edges from $x$ to the sets $Y'_{j}$, $j\neq i$. So, for at least $|X_i|/r^{r-1}$ of the vertices its tuples are all identical.  Call these sets $X'_i$ and note that $|X'_i|\geq  t'/r^{r-1} = t$ for every $i\in[r]$.
\par To finish, we apply Lemma \ref{lemma:superdrc} to the collection $\{Y'_i\}^r_{i=1}$, obtaining sets $\{Y''_i\}^r_{i=1}$ such that, for each $i\in[r]$, $Y''_i\subset Y'_i$, $|Y''_i|=\frac{1}{2^{t'+1}r}\log_r(K'/r^{t'r})$,   $Y''_c$ induces a monochromatic clique and the complete bipartite graph connecting  vertices between  $Y''_i$ and $Y''_{j}$, for $i\neq j$, is  monochromatic. We select $K'$ to make sure this quantity exceeds $t$. Looking at the graph induced by the sets  $\{X'_i\}^r_{i=1}$ and  $\{Y''_i\}^r_{i=1}$, we  thus have a blow-up of a complete graph of order $2r$ with each blow-up of size at least $t$, where all $r$ colors are used, so this structure must contain as a subgraph a member from  $\mathcal{F}_t^r$.  Hence, $\ex_r(K_n, \mathcal{F}_t^r) \le Cn^{2-1/t'}$.
\end{proof}

\subsection{Removing the $r^r-1$ factor}\label{sec:improvements}
\par We now discuss the deficiency of the above proof, namely the loss of a constant factor of $r^r-1$, which we conjecture to be unnecessary.  We lost this factor while trying to find monochromatic bipartite subgraphs between $X_i$ and $Y'_j$, for $i \ne j$.  Obviously, to do so, one needs to shrink $X_{i}$ by some factor.  Observe that, for the case $r=2$,  this step is not needed (as can be seen in the proof of Theorem \ref{thm:GiNa} given in the next subsection).  For $r \ge 3$,  the issue is that at this stage of the proof, we have already applied dependent random choice (Lemma \ref{lemma:drc}), and fixed $|X_{i}|$ to be a subset of size $t$. Once we do this, we cannot afford any further shrinking of $X_{i}$ to obtain the conjectural bound. One could hope to find the monochromatic bipartite graphs in a different order, perhaps finding all the bipartite graphs that are incident on $X_1$, and proceeding inductively. The issue is once two subsets (say $A$ and $B$) are fixed that are the two joint monochromatic neighborhoods of $X_1$ between which we are supposed to find a bipartite graph of a different color (say $2$), we have no guarantee of finding even a single $2$-colored edge between $A$ and $B$. If $A$ and $B$ were linear sized subsets of a subgraph which is \textit{regular} in color $2$ however, this wouldn't be an issue. Here, regular is in the sense of the Sparse Regularity Lemma (for example, see \cite{scott}). Of course, for the regularity assumption to be useful, one needs more delicate dependent random choice type lemmas, which guarantee linear sized neighborhoods. This turns out to be not a serious issue, as there are such lemmas, for example see Lemma 6.3 in \cite{DRC}. 
\par There is a caveat here, namely that if one wishes to find linear sized neighborhoods via a dependent random choice type lemma, one needs to tolerate a small fraction of subsets which don't have the desired large neighborhoods. However, we would only need a single subset with a large neighborhood, such that the subset is a clique. If the graph is large enough, there will be enough subsets that are monochromatic cliques so that one of them will have a large neighborhood. 
\par So we believe that such a regularity based approach might prove to be fruitful to remove the $r^{r-1}$ factor, but it is tricky to find even a single regular pair with many edges in sparse graphs. As noted in \cite{scott}, when one applies the Sparse Regularity Lemma to a graph, it could be that none of the edges end up being between regular pairs of the given partition. This is in stark contrast to the dense case, where finding a single regular pair is a trivial consequence of the Szemerédi regularity lemma.

\subsection{The case of $r=2$}\label{sec:shorterproof}
In this subsection, in an attempt to demonstrate the relative simplicity of Conjecture~\ref{mainconjecture} when $r=2$, we give a very short proof of this particular case, originally proved by Gir\~ao and Narayanan \cite{girao} using a more involved approach. Our version of the proof will essentially be an adaptation of an argument from \cite{caro}, replacing the usage of the K\H{o}vari-S\'os-Tur\'an theorem with the dependent random choice lemma (Lemma \ref{lemma:drc}). 
\par Before we begin, we recall two definitions. Denote by $R(k)$ the classical Ramsey number, that is, the smallest integer for which every $2$-edge-coloring of $K_n$, with $n\geq R(k)$, contains a monochromatic $K_k$. Denote by $BR(k)$ the bipartite Ramsey number, that is, the smallest integer for which every $2$-edge-coloring of $K_{n,n}$, with $n\geq BR(k)$, contains a monochromatic $K_{k,k}$. All we need in the following proof is that $R(k)$ and $BR(k)$ are constants depending only on $k$, which is a well-known fact.

\begin{theorem}[\cite{girao}]\label{thm:GiNa}
For any $t\geq 1$, there exists a constant $C:=C(t)$ such that,  for $n$ large enough,  $\ex_2(K_n, \mathcal{F}_t) \le Cn^{2-1/t}$.
\end{theorem}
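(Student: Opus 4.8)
The plan is to prove that $\ex_2(K_n, \mathcal{F}_t) \le Cn^{2-1/t}$ by exploiting the crucial feature of the two-color case: the family $\mathcal{F}_t$ consists of only two patterns, a monochromatic $K_{2t}$ (both cliques the same color, joined by that color) or, more usefully, a $K_{t,t}$ that is monochromatic in one color with each side a clique in the \emph{other} color. So it suffices to locate either a large monochromatic clique, or a monochromatic complete bipartite graph whose two sides are each monochromatic cliques in the opposite color. Given a $2$-coloring of $K_n$ with at least $Cn^{2-1/t}$ edges in each color, I would first fix attention on one color class, say color $1$, viewed as a graph $G_1$ with $\Omega(n^{2-1/t})$ edges.

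First I would apply the dependent random choice lemma (Lemma~\ref{lemma:drc}) to $G_1$, with parameter $t$ and with $K$ chosen to be a large constant $K'$ (specifically $K' \ge \max\{R(t), BR(t)\}$, to be used below). This produces a set $S$ of $K'$ vertices in which every $t$-subset $X \subseteq S$ has a common neighborhood in $G_1$ of size at least $K'$. The point of choosing $C$ large is precisely to guarantee the hypothesis $e(G_1) \ge Cn^{2-1/t}$ of Lemma~\ref{lemma:drc} is met.

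Next, since $|S| = K' \ge R(t)$, the two-coloring restricted to $S$ contains a monochromatic $K_t$; call its vertex set $X$, and say it has color $a \in \{1,2\}$. Because $X$ is a $t$-subset of $S$, it has a common neighborhood $N$ in color $1$ with $|N| \ge K' \ge R(t)$. Within $N$, again find a monochromatic $K_t$ on a vertex set $Y$, of some color $b$. Now $X$ is a monochromatic clique of color $a$, $Y$ is a monochromatic clique of color $b$, and all edges between $X$ and $Y$ have color $1$ (since $Y \subseteq N$). If $a = b$, then depending on whether $a = 1$ we either already have the two-cliques-plus-joining-edges configuration, or we can bipartite-Ramsey inside the structure; the clean way is to argue that the coloring on $X \cup Y$ together with the monochromatic color-$1$ bipartite graph between them forces a member of $\mathcal{F}_t$, using $BR(t)$ to extract a monochromatic $K_{t,t}$ between sides that remain cliques if a direct pattern does not appear. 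The key structural observation is that once $X, Y$ are cliques and the bipartite graph between them is monochromatic, any assignment of colors to $X$ and $Y$ yields one of the two patterns of $\mathcal{F}_t$ after possibly passing to monochromatic sub-cliques via $R(t)$ and $BR(t)$.

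The main obstacle — and the reason this is genuinely short in the two-color case but not in general — is bookkeeping the colors $a, b$ of the two cliques against the color of the connecting bipartite graph so that \emph{every} case lands in $\mathcal{F}_t$ without any further shrinking of the cliques below size $t$. This is exactly the step that, as the paper notes, costs nothing when $r = 2$ (no $r^{r-1}$ loss) because there is essentially one bipartite graph to control and the Ramsey/bipartite-Ramsey constants $R(t), BR(t)$ absorb all the colour-sorting into the single constant $K'$. I would therefore organize the proof so that $K'$ is chosen at the outset larger than $\max\{R(t), BR(t)\}$, apply Lemma~\ref{lemma:drc} once to color $1$, and then finish purely by two applications of Ramsey-type extraction inside constant-sized sets, checking the handful of colour cases to confirm the resulting configuration is a member of $\mathcal{F}_t$.
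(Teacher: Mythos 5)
There is a genuine gap, and it stems from a misreading of $\mathcal{F}_t$. A monochromatic $K_{2t}$ is \emph{not} a member of $\mathcal{F}_t$: by definition both colors must be present, and the two patterns are (i) one color forming a single $K_t$ (with the other color covering the complementary $K_t$ and the bipartite graph) and (ii) one color forming two disjoint $K_t$'s (with the other color covering the bipartite graph between them). Because you list ``a monochromatic $K_{2t}$'' as one of the target patterns, your case analysis silently accepts the outcome $a=b=1$, i.e.\ both cliques $X$, $Y$ and the bipartite graph between them all of color $1$. In that case $X\cup Y$ spans a monochromatic $K_{2t}$, and no application of $R(t)$ or $BR(t)$ inside it can ever produce a color-$2$ edge, so no member of $\mathcal{F}_t$ is obtained. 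This case cannot be wished away: your argument only ever invokes the density hypothesis for color $1$ (Lemma~\ref{lemma:drc} is applied to $G_1$ alone), so nothing prevents the set $S$ and the neighborhood $N$ from being entirely red. An argument that never uses the assumption that color $2$ also has $\Omega(n^{2-1/t})$ edges cannot be correct, since that assumption is necessary for the theorem.

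The repair is essentially the route the paper takes, and it changes your constants. You must take $K=R(BR(t))$ (the \emph{nested} composition, not $\max\{R(t),BR(t)\}$) so that the Ramsey extractions yield monochromatic cliques of size $BR(t)$ rather than $t$. Then either one of the two extracted cliques is blue --- in which case your color bookkeeping does produce a member of $\mathcal{F}_t$, exactly as you describe --- or you obtain a \emph{red} clique of size $BR(t)$. Now repeat the whole argument with the colors swapped, applying Lemma~\ref{lemma:drc} to color $2$ (this is where the density of the second color is used), to obtain a disjoint \emph{blue} clique of size $BR(t)$. Finally, apply bipartite Ramsey to the $2$-colored $K_{BR(t),BR(t)}$ between these two cliques: the resulting monochromatic $K_{t,t}$, together with the red clique on one side and the blue clique on the other, lands in $\mathcal{F}_t$ whichever color it has. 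Your proposal contains the first half of this argument but is missing the second application of dependent random choice to the other color and the final bipartite-Ramsey step, which are precisely what handle the all-red case.
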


\begin{proof}
Given $t$, consider a $2$-edge-coloring of $K_n$ (for $n$ large) with $Cn^{2-1/t}$ edges in each color class, where $C$ is the constant given in Lemma \ref{lemma:drc} for $t$ and $K=R(BR(t))$. Then we can find a red $K_{t,BR(t)}$ where both parts of the bipartite graph are monochromatic cliques. If either of these cliques are blue, we find a member of $\mathcal{F}_t$. So we may assume that both are red and thus, there is a red clique of size $BR(t)$. Similarly, we may assume that there is a blue complete graph of the same order disjoint with the previous one. Consider now the $2$‐edge colored complete bipartite graph, $K_{BR(t),BR(t)}$, induced by the vertices of those two cliques. By definition, there is a monochromatic $K_{t,t}$ in such complete bipartite graph, yielding the desired graph contained in $\mathcal{F}_t$.
\end{proof}

\par We remark that the proof above is quite specific to the $r=2$ case. Indeed, if we employed a similar strategy for the $r=3$ case, for each monochromatic $K_{t,BR(t)}$ we find where both parts of the bipartite graph are monochromatic cliques, all we could conclude is that the entire structure does not use all three colors. In particular, instead of finding three large cliques of three different colors, we could end up with three large cliques with all the same colors.

\section{Proofs of Theorems \ref{thm:generalbounds} and \ref{thm:p6}}

Here we focus on upper and lower bounds for the function $\bal_3(n, P_{3k})$.  For convenience, we recall the corresponding result from the $2$-color case, this time in its most precise formulation.

\begin{theorem}[\cite{caro}]\label{thm:bal_2_paths}
Let $k \ge 1$ be even and let $n \ge \frac{9}{8}k^2 + \frac{1}{2}k + 1$. Then
\begin{equation*}
    \bal_2(n, P_{k}) = \begin{cases}
               \frac{(k-2)n}{4} -\frac{k^2}{32} +\frac{1}{8}               & k\equiv 2 \pmod{4} \\
               \frac{(k-4)n}{4} -\frac{k^2}{32} + \frac{k}{8} +1 & k\equiv 0 \pmod{4}
           \end{cases}
\end{equation*}

\end{theorem}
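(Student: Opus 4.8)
The plan is to prove both directions by reformulating balanced paths in terms of the extremal theory of \emph{linear forests} (disjoint unions of paths). Write $R$ for the red graph and, for a copy of $P_k$, call its number of red edges its \emph{red-count}; a balanced $P_k$ is exactly a copy of red-count $k/2$. The red edges of any copy of $P_k$ form a linear forest, so the red-count of every copy is at most $\mathrm{lf}(R)$, the maximum number of edges in a linear forest contained in $R$. Conversely, trimming a linear forest of $\mathrm{lf}(R)\ge k/2$ edges down to exactly $k/2$ edges (deleting end-edges one at a time) and stitching its components into a single path with arbitrary bridge- and padding-edges yields, for $n$ large, a copy of $P_k$ with red-count $\ge k/2$. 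Hence some copy of $P_k$ has red-count $\ge k/2$ if and only if $\mathrm{lf}(R)\ge k/2$, so the problem hinges on the function $\mathrm{lf}$.

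First I would establish a discrete intermediate value principle: the set of red-counts realised by copies of $P_k$ in $K_n$ is an interval of integers. This follows because any two copies are joined by a sequence of elementary moves — rotations $v_0v_1\cdots v_k \mapsto v_{i-1}\cdots v_0 v_i\cdots v_k$ and single-endpoint exchanges — each of which alters exactly one edge and so changes the red-count by at most $1$; as $K_n$ is complete, these moves connect the whole space of copies. Consequently, if no copy has red-count $k/2$, then writing $B$ for the blue graph, either every copy has red-count $<k/2$ (so $\mathrm{lf}(R)\le k/2-1$) or every copy has red-count $>k/2$ (so $\mathrm{lf}(B)\le k/2-1$). Thus the absence of a balanced $P_k$ forces one colour class to have a bounded maximum linear forest, and the theorem becomes an extremal statement.

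The core is therefore an Erdős–Gallai–type bound for linear forests, to be applied with $L=k/2-1$: if $\mathrm{lf}(R)\le L$ then, for $n$ large, $e(R)\le \lfloor L/2\rfloor n-\binom{\lfloor L/2\rfloor+1}{2}+\varepsilon_L$, where $\varepsilon_L=1$ if $L$ is odd and $0$ otherwise, with equality for the join $K_{\lfloor L/2\rfloor}\vee \overline{K}_{n-\lfloor L/2\rfloor}$ (plus one extra edge when $L$ is odd). Substituting $L=k/2-1$ and simplifying $\binom{\lfloor L/2\rfloor+1}{2}$ reproduces exactly the two cases of the stated formula, the split $k\equiv 2$ versus $k\equiv0\pmod 4$ coming precisely from the parity of $L=k/2-1$. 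Read as a colouring, this same configuration gives the matching lower bound: take red $=K_z\vee\overline{K}_{n-z}$ with $z=\lfloor(k-2)/4\rfloor$ (plus one extra red edge when $k\equiv0\pmod4$) and everything else blue; then every red linear forest has at most $2z\,(+1)\le k/2-1$ edges, so no copy of $P_k$ is balanced, while a direct count gives exactly $\frac{(k-2)n}{4}-\frac{k^2}{32}+\frac18$ red edges when $k\equiv2$ and $\frac{(k-4)n}{4}-\frac{k^2}{32}+\frac{k}{8}+1$ when $k\equiv0\pmod4$.

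I would prove the core lemma by induction on $n$: if some vertex has degree at most $\lfloor L/2\rfloor$, delete it and apply induction; otherwise every vertex has degree at least $\lfloor L/2\rfloor+1$, and I claim this forces a linear forest with more than $L$ edges, a contradiction. The hard part will be exactly this last step — extracting from a graph of high minimum degree a linear forest of the precise size $L+1$ with the correct parity behaviour. A single long path (guaranteed $\ge\lfloor L/2\rfloor+1$ edges) does not suffice; one must greedily peel off several vertex-disjoint paths while keeping enough degree in the remainder to continue growing the forest, and control the accounting tightly enough to pin down the additive constant and the $+1$ in the odd case. This minimum-degree–to–linear-forest estimate, together with making the rotation connectivity in the intermediate value step fully rigorous, is where the quantitative hypothesis $n\ge\tfrac98 k^2+\tfrac12 k+1$ is consumed, and it is the main obstacle of the proof.
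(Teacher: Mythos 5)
You cannot be compared against an internal proof here: the paper states this result as a quotation from \cite{caro} and gives no proof of it, so your attempt has to be judged on its own. Your reduction framework is sound and, notably, your arithmetic is exactly right: the red-edge set of any $P_k$ is a linear forest; trimming and stitching shows that some copy has red-count at least $k/2$ whenever $\mathrm{lf}(R)\ge k/2$; and the discrete intermediate value argument works, since prefix-reversal rotations change exactly one edge and (together with endpoint exchanges, for $n\ge k+2$) generate all copies --- prefix reversals alone generate the symmetric group on a fixed vertex set, pancake-sorting style. I checked your extremal construction: with $z=\lfloor(k-2)/4\rfloor$ the graph $K_z\vee\overline{K}_{n-z}$ has $zn-\binom{z+1}{2}$ edges and $\mathrm{lf}=2z$, and substituting $z=(k-2)/4$ resp.\ $z=(k-4)/4$ (with the extra edge) reproduces both displayed formulas exactly, including the constants $\frac18$ and $\frac{k}{8}+1$. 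So the theorem does reduce, correctly and cleanly, to the exact Tur\'an number for graphs with no linear forest of $k/2$ edges.

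The genuine gap is precisely the step you flag as ``the main obstacle,'' and it is worse than a technicality: the induction you sketch fails as stated. The claim ``minimum degree at least $\lfloor L/2\rfloor+1$ forces $\mathrm{lf}>L$'' is false for small $n$: the clique $K_{L+1}$ has minimum degree $L\ge\lfloor L/2\rfloor+1$ yet $\mathrm{lf}(K_{L+1})=L$, and with $L=2z$ it has $2z^2+z$ edges, exceeding your bound $z(L+1)-\binom{z+1}{2}=(3z^2+z)/2$. So the vertex-deletion induction cannot run down to a trivial base case; if it halts at some $n_1(L)$ where the minimum-degree claim first holds, the surviving graph may carry extra edges (cliques, or mixed configurations such as $K_z\vee(K_m\cup\overline{K}_{n-z-m})$ with $2z+m-1=L$, which also have $\mathrm{lf}=L$), and the resulting bound overshoots by an additive constant --- fatal for an exact result whose whole content is in those constants. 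Closing this requires either a genuinely sharper argument (stability of the extremal join configuration, or an Erd\H{o}s--Gallai-type analysis tracking both clique-like and join-like extremal candidates across the middle range of $n$) or importing the known exact Tur\'an number for the family of linear forests with a prescribed number of edges. Relatedly, the explicit hypothesis $n\ge\frac98k^2+\frac12k+1$ is never engaged: your proof is stated ``for $n$ large,'' and nothing in the sketch produces a quadratic-in-$k$ threshold. As it stands you have a correct and elegant reduction plus matching constructions, but not a proof of the stated theorem.
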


We remark that in \cite{caro} the extremal family of $2$-edge-colored $P_{2k}$-avoiding complete graphs were  explicitly characterized.
\subsection{Preliminaries}
In preparation for the proofs of Theorems \ref{thm:generalbounds} and \ref{thm:p6}, we first collect some helpful lemmas. In the proofs below, we always assume that the host graph has sufficiently many vertices. We start giving the $3$-balancing number for $P_3$. Observe that a $3$-balanced $P_3$ is also known as a rainbow $P_3$.

\begin{proposition}\label{prop:p3is0} 
$\bal_3(n, P_{3})=0$.
\end{proposition}
\begin{proof}
  Assume that we have a large $3$-edge-colored complete graph with at least $1$ edge from each color. There must exist a triangle with exactly two of the colors represented, say red and blue. No green edge can be adjacent to this triangle (otherwise we we can easily construct  a rainbow $P_3$), but there must be a green edge. So, casing on the color of the edge between the green edge and one of the vertices of the triangle adjacent to both red and blue, we see that a balanced $P_3$ arises either way.
\end{proof}

We will use notation $P=x_0x_1\dots x_k$ to represent a $k$-path with edges $x_ix_{i+1}$ for $i\in\{0,\dots,k-1\}$.  Also we will call a $3$-balanced $K_3$ a \emph{rainbow triangle} because this is the term that is used in connection to Gallai-colorings, which we will use later.

\begin{lemma}\label{lem:norainbow} Let $k\geq 2$. If a $3$-edge-colored complete graph contains a balanced $P_{3k-3}$ and a vertex-disjoint rainbow triangle, then the graph contains a balanced $P_{3k}$.
\end{lemma}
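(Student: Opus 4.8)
The plan is to extend the given balanced $P_{3k-3}$ into a balanced $P_{3k}$ by adjoining the three vertices of the rainbow triangle while adding exactly one edge of each colour (the balanced $P_{3k-3}$ uses each colour exactly $k-1$ times, so one extra edge per colour is precisely what is needed). Write the balanced path as $P=x_0x_1\dots x_{3k-3}$, and let $T$ be the rainbow triangle on $\{a,b,c\}$, vertex-disjoint from $P$; for a triangle vertex $v$ let $o(v)$ be the colour of the edge of $T$ opposite $v$. The basic move is to append a path $u\to v_1\to v_2\to v_3$ at an endpoint $u$ of $P$, where $v_1v_2v_3$ is a permutation of $a,b,c$: this uses the two triangle edges $v_1v_2,v_2v_3$ together with the connecting edge $uv_1$, and it is rainbow exactly when $uv_1$ has the colour of the remaining triangle edge $v_1v_3$. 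Since such an append carries all three colours, adjoining it keeps the path balanced and yields a $P_{3k}$.

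First I would record when this move is available. A short check shows that a rainbow append with first vertex $v_1$ exists unless $uv_1=o(v_1)$, and hence that \emph{no} rainbow append at $u$ exists if and only if $uv=o(v)$ for every $v\in\{a,b,c\}$; equivalently, $\{u\}\cup V(T)$ spans a properly $3$-edge-coloured $K_4$ (every triangle rainbow). Call such an endpoint \emph{bad}. If either endpoint of $P$ is not bad, I append a rainbow $P_3$ there and am done.

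The main obstacle is therefore the case in which \emph{both} endpoints are bad. Here every gadget built solely from endpoint edges and triangle edges fails, because the proper-$K_4$ identities (the colour from a bad endpoint to $v$ is $o(v)$, and the triangle edge $v_iv_j$ has the colour $o$ of the third vertex) force any such path to repeat a colour. The fix is to reroute through the interior of $P$: I break the edge $x_0x_1$ and replace it by the detour $t_3\to x_0\to t_1\to t_2\to x_1$, using all three triangle vertices, so the new path is $t_3\to x_0\to t_1\to t_2\to x_1\to x_2\to\dots\to x_{3k-3}$. The proper-$K_4$ identities make the four new edges carry colours $o(t_3),\,o(t_1),\,o(t_3),\,o(t_2)$, i.e. the multiset $\{\rho,\rho,\text{other},\text{other}\}$ with $\rho=o(t_3)$; choosing $t_3$ so that $o(t_3)$ equals the colour of the broken edge $x_0x_1$ then restores balance exactly. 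I expect the genuinely delicate point to be the single connecting edge $t_2x_1$ at the \emph{interior} vertex $x_1$, whose colour is not pinned down by the badness of the endpoints. I would resolve this by exploiting the available freedom—which same-coloured path edge to break (there are $k-1\ge 1$ of each colour), the orientation of the detour, and which endpoint to work from—reducing to the fully degenerate situation in which every vertex is bad; in that uniform case the edge $t_2x_1$ has colour $o(t_2)$ and the displayed path is directly checked to be balanced. Controlling this interior connecting edge in the both-endpoints-bad regime is, I believe, the crux of the argument.
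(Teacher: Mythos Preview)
Your setup and the identification of the ``bad endpoint'' obstruction are correct and match the paper's argument. The gap is in your treatment of the both-endpoints-bad case: the freedoms you invoke do not control the colour of the interior connecting edge $t_2x_1$. Breaking a different edge of the same colour only moves the problem to another interior vertex (now \emph{neither} end of the broken edge is known to be bad), and the two orientations at the two endpoints give only four constraints, which need not be satisfied. Concretely, already for $k=2$ with $P=x_0x_1x_2x_3$ coloured red--blue--green, one can choose the edges from $\{x_1,x_2\}$ to the triangle so that $t_2x_1\neq o(t_2)$ and $t_2x_2\neq o(t_2)$ for every admissible choice of $t_2$; your detour then fails in every variant, yet the lemma's conclusion still holds. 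Your proposed reduction to ``every vertex is bad'' is not justified.

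The paper sidesteps the issue entirely by never touching an interior vertex. After establishing that both endpoints are bad and that, say, $x_0x_1$ is red, it takes the path
\[
x_1x_2\cdots x_{3k-3}\,x_b\,x_g\,x_0\,x_r,
\]
i.e.\ it removes the first edge and wraps around through the triangle to the \emph{other} bad endpoint $x_{3k-3}$ and back to $x_0$. The four new edges $x_{3k-3}x_b$, $x_bx_g$, $x_gx_0$, $x_0x_r$ have colours blue, red, green, red, all forced by the badness of $x_{3k-3}$ and $x_0$ and by the triangle itself; the net change is $+2$ red, $+1$ blue, $+1$ green against the removed red edge, giving a balanced $P_{3k}$. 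The trick you are missing is to exploit the badness of \emph{both} endpoints simultaneously, linking $x_{3k-3}$ to $x_0$ through the triangle instead of reconnecting to the uncontrolled vertex $x_1$.
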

\begin{proof}
Let $P=x_0x_1\dots x_{3k-3}$ be  a balanced path which is vertex-disjoint  to a rainbow triangle $\{x_r,x_g,x_b\}$ where $x_gx_b$ is red, $x_rx_g$ is blue and $x_bx_r$ is green. If $x_0x_r$ or $x_{3k-3}x_r$ is blue or green, we we can easily construct  a balanced $P_{3k}$ (take $Px_rx$ or $xx_rP$ where $x=x_b$ or $x=x_g$ according to the color we choose). Thus, we may assume that both $x_0x_r$ and $x_{3k-3}x_r$ are  red. Similarly, avoiding balanced $P_{3k}$s,  the colors of $x_0x_b$, $x_{3k-3}x_b$, $x_0x_g$ and  $x_{3k-3}x_g$ are determined to be blue and green respectively. Suppose now, without loss of generality,  that the first edge of the path $P$, $x_0x_1$,  is red. Then the path $x_{1} \dots x_{3k-3}x_bx_gx_{0}x_r$ is a balanced $P_{3k}$.
\end{proof}

Now we show the lower bound from Theorem \ref{thm:generalbounds}. Let $k\geq 2$ and consider a $3$-edge-coloring of $K_n$ where we split $V(K_n)$ into three parts $A$, $B$, and $C$, with $|A|=k-1$ and $|B|=|C|=(n-k+1)/2$, and we color the edges as follows: edges between $A$ and $B$ red, edges between $A$ and $C$ blue, and all remaining edges in green. Since any balanced $P_{3k}$ requires $k$ red edges and $k$ blue edges, and any such edge is incident on $A$, this graph contains no balanced $P_{3k}$. Further, the graph has at least $(k-1)(n-k+1)/2$ edges in each color class, implying
\begin{equation}\label{eq:lb}
\bal_3(n, P_{3k})\geq \left(\frac{k-1}{2}+o(1)\right)n
\end{equation}
as desired.

Before proving the upper bound of Theorem \ref{thm:generalbounds}, we will first determine the $3$-balancing number for $P_6$. This will show that we are able to match the lower bound (\ref{eq:lb}) when $k=2$. 

\subsection{Proof of Theorem \ref{thm:p6}}

In order to prove Theorem~\ref{thm:p6} we need the following lemmas.

\begin{lemma}\label{lem:1} If a $3$-edge-colored complete graph contains two vertex-disjoint balanced $P_3$s whose middle edges are different colors, then the graph contains a balanced $P_6$.
\end{lemma}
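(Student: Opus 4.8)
The plan is to exhibit an explicit balanced $P_6$ (one in which each of the three colours appears on exactly two edges), built only from the edges of the two given rainbow paths together with at most two edges joining their endpoints. Write the two rainbow $P_3$'s as $P=x_0x_1x_2x_3$ and $Q=y_0y_1y_2y_3$, and denote the colours of $x_0x_1,x_1x_2,x_2x_3$ by $p_1,p_2,p_3$ and those of $y_0y_1,y_1y_2,y_2y_3$ by $q_1,q_2,q_3$. Since each path is rainbow, $\{p_1,p_2,p_3\}=\{q_1,q_2,q_3\}=\{1,2,3\}$, and by hypothesis the middle colours satisfy $p_2\neq q_2$. The single place I expect to use the hypothesis is the resulting observation that the four end-edge colours already exhaust all colours, i.e. $\{p_1,p_3\}\cup\{q_1,q_3\}=\{1,2,3\}$, which holds because $\{p_1,p_3\}=\{1,2,3\}\setminus\{p_2\}$ and $\{q_1,q_3\}=\{1,2,3\}\setminus\{q_2\}$ with $p_2\neq q_2$.

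I would then focus on the single matching of crossing edges $\alpha:=c(x_3,y_0)$ and $\delta:=c(x_0,y_3)$, and split on whether $\alpha=\delta$. If $\alpha\neq\delta$, let $m$ be the colour missing from $\{\alpha,\delta\}$; by the observation above $m$ equals one of $q_3,q_1,p_1,p_3$, and accordingly I attach an entire path by \emph{both} of its endpoints to the other path and append a single compatible path edge. For instance, when $m=q_3$ the path $y_0\,x_3x_2x_1x_0\,y_3y_2$ (edges $\alpha,p_3,p_2,p_1,\delta,q_3$) has $P$ rainbow and extra edge-set $\{\alpha,\delta,q_3\}=\{1,2,3\}$, hence is balanced; the cases $m\in\{q_1,p_1,p_3\}$ are handled by the analogous flanking paths $y_3\,x_0x_1x_2x_3\,y_0y_1$, $x_3\,y_0y_1y_2y_3\,x_0x_1$, and $x_2x_3\,y_0y_1y_2y_3\,x_0$. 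If instead $\alpha=\delta=:s$, then since $\{q_3,p_1\}\cup\{q_1,p_3\}=\{1,2,3\}$ the colour $s$ lies in one of these two pairs, and a \emph{single}-connector gluing (two rainbow $P_3$'s sharing one endpoint) is balanced: e.g. $s=q_3$ gives $x_0x_1x_2x_3\,y_0y_1y_2$ and $s=p_1$ gives $y_3y_2y_1y_0\,x_3x_2x_1$, and symmetrically for $s\in\{q_1,p_3\}$ using $\delta$.

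The step I expect to be the real crux—and the reason the lemma is not completely trivial—is that the most obvious strategy, gluing the two given rainbow $P_3$'s at a shared endpoint (equivalently, joining $P$ and $Q$ into a $P_7$ and deleting one end edge), can be defeated by an adversarial colouring of the joining edges: each such gluing succeeds only when the joining edge takes one of two prescribed colours, and those win-sets can all be avoided simultaneously. The insight that rescues the argument is to also allow the flanking paths that traverse an entire path and use \emph{both} crossing edges, combined with the bookkeeping fact that distinct middle colours force the four end-edge colours to cover $\{1,2,3\}$; this is exactly what guarantees that in every branch at least one explicit template is balanced. I would therefore organise the write-up around the dichotomy $\alpha=\delta$ versus $\alpha\neq\delta$, verifying in each branch that the chosen template is a genuine $P_6$ on seven distinct vertices whose colour multiset is $\{1,1,2,2,3,3\}$; these checks are routine once the templates are written down. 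Notably, the argument never uses an edge whose colour is outside my control, so no chord-colour case analysis is needed.
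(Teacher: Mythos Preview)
Your proof is correct and follows essentially the same approach as the paper's: both arguments inspect only the two crossing edges $c(x_3,y_0)$ and $c(x_0,y_3)$ and in every case exhibit a balanced $P_6$ using only the edges of the two given rainbow paths together with one or both of these crossings. The paper fixes a concrete WLOG colouring (red--blue--green and red--green--blue) and thereby reduces to three explicit cases, whereas you keep the colours symbolic and organise the split as $\alpha=\delta$ versus $\alpha\neq\delta$; this is a difference of presentation rather than of method.
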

\begin{proof} Without loss of generality, assume that the first path $P=x_0x_1x_2x_3$ is red-blue-green, and the second path $P'=x_4x_5x_6x_7$ is red-green-blue. If $x_3x_4$ is blue or red, we have a balanced $P_6$  (take $PP'$ and remove either $x_0$ or $x_7$ according to the color of $x_3x_4$). Similarly if $x_0 x_7$ is green or red, we have a balanced $P_6$. Assuming these edges are green and blue respectively, we can see that $x_7x_0x_1x_2x_3x_4x_5$ is a balanced $P_6$.
\end{proof}

The above lemma will allow us to conclude that all balanced $P_3$s in a $3$-edge-colored complete graph without balanced $P_6$s must be color isomorphic.  This fact combined with the next lemma will allow us to conclude that  in a $3$-edge-colored complete graph with several balanced $P_3$s and no balanced $P_6$,  the color of the middle edge of any of  the $P_3$s must be dense in the complete graph.

\begin{lemma}\label{lem:2} If a $3$-edge-colored complete graph without a balanced $P_6$ contains  two vertex-disjoint balanced $P_3$s whose middle edges are of the same color, say blue, then any other edge between the vertices of these two paths (including edges contained within a single path) is blue.

\end{lemma}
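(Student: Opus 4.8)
\textbf{Proof proposal for Lemma~\ref{lem:2}.}

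The plan is to argue by contradiction: assuming the two vertex-disjoint balanced $P_3$s have blue middle edges but some edge between them (or within one of them) is \emph{not} blue, I would construct a balanced $P_6$, contradicting the hypothesis. Write the two paths as $P=x_0x_1x_2$ and $P'=x_3x_4x_5$, where $x_1x_2$ and $x_4x_5$ are the blue middle edges. Since each $P_3$ is balanced (rainbow), the two non-middle edges $x_0x_1$ and $x_2x_3'$ -- here I mean the outer edges of each path -- carry the two remaining colors, red and green. So on each path we have a blue edge together with a red edge and a green edge incident to it. The key observation is that a balanced $P_6$ needs exactly two edges of each color, and we already have two blue edges available (the two middle edges) plus a red and a green on each side; the freedom in how we splice the paths together, combined with the color of any offending non-blue connecting edge, should let us realize the required $2$-$2$-$2$ color distribution.

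The main step is a case analysis on which edge is non-blue and on its color. First I would handle an edge joining the two paths, say $x_ix_j$ with $i\in\{0,1,2\}$, $j\in\{3,4,5\}$, that is red or green. The idea is to use this edge as a connector so that, walking along $P$, across $x_ix_j$, and along $P'$, we pass through both blue middle edges exactly once and pick up the connector's color once; then by choosing which endpoints to drop (the paths have six vertices total, and a $P_6$ uses all six, so in fact we want to route through all of them) we balance the remaining red/green counts. Because each original path already contributes one red and one green among its outer edges, and the two blue middles are forced, the only quantity we need to control is the colour contributed by the single connecting edge together with whichever outer edges survive in the traversal; a short check of the orientations (which end of $P$ meets which end of $P'$) should always yield a $2$-$2$-$2$ split. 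I would lean on the symmetry between red and green, and between the two paths, to cut the number of genuinely distinct cases down to a handful.

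The subtler case -- and what I expect to be the main obstacle -- is an offending edge contained \emph{within} a single path, i.e.\ the chord $x_0x_2$ of $P$ (or $x_3x_5$ of $P'$) being non-blue, since here there is no second path's structure immediately at hand to absorb the extra colour. For this case the plan is to combine the chord with the other path's edges: the chord $x_0x_2$ together with the triangle it closes on $\{x_0,x_1,x_2\}$ gives an alternative balanced $P_3$ on the same three vertices whose middle edge is \emph{not} blue, and then Lemma~\ref{lem:1} (two vertex-disjoint balanced $P_3$s with different-coloured middle edges force a balanced $P_6$) applies directly against $P'$, whose middle edge is blue. This reduction to Lemma~\ref{lem:1} is the cleanest route, and I would check that the triangle on $\{x_0,x_1,x_2\}$, having a blue edge $x_1x_2$, a red or green outer edge $x_0x_1$, and a non-blue chord $x_0x_2$, indeed contains a rainbow $P_3$ with a non-blue centre. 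The routine but care-demanding part throughout is tracking colours across the splicings; the conceptual crux is spotting that an internal non-blue chord manufactures a differently-centred balanced $P_3$, letting Lemma~\ref{lem:1} finish the argument.
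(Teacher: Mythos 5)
Your proposal rests on a miscount that propagates through both of your cases: in this paper $P_k$ denotes a path with $k$ \emph{edges}, so a balanced $P_3$ is a rainbow path on \emph{four} vertices $x_0x_1x_2x_3$ with middle edge $x_1x_2$, and a balanced $P_6$ has \emph{seven} vertices and six edges, two of each color. The two disjoint balanced $P_3$s therefore span eight vertices, a balanced $P_6$ built from them must omit exactly one of those vertices, and the ``other'' edges to be shown blue number $22$, not the handful your six-vertex picture suggests. This breaks your main splice: walking along $P$, across a connector $x_ix_j$, and along $P'$ yields a path only when $x_i$ and $x_j$ are \emph{endpoints} of their respective paths (four of the sixteen cross edges); for a connector meeting an internal vertex, say $x_1x_5$, no single splice can traverse six of the available edges, so those twelve cross edges are not handled by your argument as stated. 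The paper instead first forces the two endpoint connectors $x_3x_4$ and $x_0x_7$ to be blue, closes the eight vertices into a cycle $C$, and then kills the remaining chords in waves (distance $2$, then $4$, then $3$ via a relabelled cycle $C'$), each wave leaning on the blue edges already established and on Lemma~\ref{lem:norainbow}.

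Your treatment of the within-path chords also fails as written: three vertices cannot carry a balanced $P_3$, so there is no ``alternative balanced $P_3$ on the same three vertices'' to feed into Lemma~\ref{lem:1}. The tool you are missing is Lemma~\ref{lem:norainbow}: when the chord $x_0x_2$ is green, $\{x_0,x_1,x_2\}$ is a rainbow \emph{triangle}, which together with the disjoint balanced $P_3$ given by $P'$ forces a balanced $P_6$; when $x_0x_2$ is red one needs the explicit path $x_0x_2x_3x_4x_5x_6x_7$, which already uses the fact that $x_3x_4$ is blue --- so the order in which the edges are processed is essential, not cosmetic. Finally, your case analysis omits the chords $x_1x_3$ and, more importantly, the endpoint-to-endpoint chords $x_0x_3$ and $x_4x_7$, which are exactly the distance-$3$ chords that force the paper to introduce the second cycle $C'$. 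The overall instinct (contradiction plus splicing, exploiting red/green symmetry) is right, but the internal connectors, the rainbow-triangle lemma, and the incremental cycle-based bookkeeping are all missing, and each is needed.
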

\begin{proof}
Let $P=x_0x_1x_2x_3$ and  $P'=x_4x_5x_6x_7$ be two vertex-disjoint  red-blue-green paths. If $x_3x_4$ is red or green, we we can easily construct  a balanced $P_{6}$ (take $PP'$ and remove either $x_0$ or $x_7$ according to the color of $x_3x_4$). Hence, $x_3x_4$ is blue. The same argument works to conclude that $x_0x_7$ is blue. Observe   that $C=x_0x_1\dots x_7x_0$ is a red-blue-green-blue-red-blue-green-blue cycle. If $x_0x_2$ is red then $x_0x_2x_3x_4x_5x_6x_7$ is a balanced $P_6$. Also, by  Lemma \ref{lem:norainbow}, $x_0x_2$ can't be green (see triangle $\{x_0,x_1,x_2\}$ and path $x_4x_5x_6x_7$). Thus, $x_0x_2$ must be blue. A similar argument works to prove that all edges $x_ix_{i+2}$, with $i\in\{0,\dots,7\}$ (where addition is taken modulo $8$) are blue. Note now that $x_3x_7$ can't be red (respectively, green) by $Px_7x_6x_5$ (respectively, $Px_7x_5x_4$). Thus, $x_3x_7$ must be blue. Again, by taking advantage of the symmetry of $C$, we can conclude that all edges of the form $x_ix_{i+4}$, with $i\in\{0,\dots,7\}$ and addition modulo $8$, are blue. Finally,  to conclude that the remaining edges are blue, we  consider a new cycle $C'=x_0x_1x_3x_2x_4x_5x_7x_6x_0$ which is color isomorphic to $C$ and repeat the arguments.
\end{proof}

We are now in a position to prove Theorem \ref{thm:p6}, in which we state that $\bal_3(n, P_{6})=(\frac{1}{2}+o(1))n$.

\begin{proof}[Proof of Theorem \ref{thm:p6}]
  Let $\ep>0$ be arbitrary, and consider a $3$-edge-coloring of  $K_n$ (for $n$ sufficiently large) with each color class having at least $(\frac{1}{2}+\ep)n$ edges. By Proposition \ref{prop:p3is0}, we can find a balanced $P_3$. Let $I$ be the vertex set of a maximum sized family of vertex-disjoint balanced $P_3$s and let $D = V(K_n) \setminus I$. Let $i=|I|$. (So $I$ contains $i/4$ vertex-disjoint balanced $P_3$s.) By Proposition \ref{prop:p3is0}, one of the color classes must not appear in $D$. We will handle the case where $i=1$ and $i\geq 2$ separately.\\
 
 \noindent
\textit{\textbf{Case 1: Assume $i=1$.}} \\
Let  $P$ be the unique balanced $3$-path in $K_n$. Assume, without loss of generality that red is missing in $D$. We first show that it cannot be the case that $D$ is almost missing another color as well.\\
 
  \noindent
 \textit{\textbf{Subcase 1.1: Suppose $D$ is monochromatic except for at most one edge.}}\\
Assume, without loss of generality, that all edges in $D$ but possibly one edge, say $uv$, are blue (so $uv$ is either blue or green). Observe that between $D \setminus\{u,v\}$ and $I$ we have almost all (but a constant) of the red and the green edges.  Since all color classes have at least $(\frac{1}{2}+\ep)n$ edges, it cannot be the case that all green and red edges are incident to the same vertex in $P$. Thus, there are two distinct vertices $x,y\in I$ and five distinct vertices (since $n$ is sufficiently large)  $x_1,x_2,y_1,y_2,z\in D \setminus\{u,v\}$ such that $xx_1$ and $xx_2$ are green, $yy_1$ and $yy_2$ are red, and so $x_1xx_2zy_1yy_2$ is a balanced $P_6$.\\

  \noindent
\textit{\textbf{Subcase 1.2: Suppose $D$ has at least two blue edges and at least two green edges.}} \\
By Theorem \ref{thm:bal_2_paths}, the $2$-color balancing number of $P_4$ is $2$, so we can find a  blue-green-balanced  $P_4$ in $D$, say $Q$.  Observe that between $V(P)$ and $D\setminus V(Q)$ we have almost all (but a constant) of the red edges.  Since all color classes have at least $(\frac{1}{2}+\ep)n$ edges,  there must be two distinct vertices $x,y\in D\setminus V(Q)$ and one vertex in $z\in V(P)$ such that $zx$ and $zy$ are red. Let $a$ and $b$ be the end-vertices of the path $Q$. Consider now the cycle $C=zxQyz$. We know that both edges $xa$ and $yb$ are either blue or green. Suppose first that $xa$ and $yb$ are both of the same color, say blue. Then the path $xQy$ has $4$ blue edges and $2$ green edges. Thus, it must contain two consecutive blue edges, say $rs$ and $st$. Then $C - s$ is a balanced  $3$-colored $6$-path. If, on the other side, $xa$ and $yb$  are one blue and one green, then we can take two consecutive edges $rs$ and $st$ from $Q$ such that they have different color and we can see that $C - s$ is a balanced $3$-colored $6$-path.\\
  
  \noindent
\textit{\textbf{Case 2: Assume $i\geq 2$.}} \\
By Lemma \ref{lem:1}, we may assume, without loss of generality, that all vertex disjoint balanced paths in $I$ are of the form red-blue-green. By Lemma \ref{lem:2}, all remaining edges induced by vertices in $I$ are blue. In this case we will conclude that either there are too few red or green edges, or we can find a balanced $P_6$.  \\

  \noindent
\textit{\textbf{Subcase 2.1: $D$ has no red or no green edges.}} \\
Assume, without loss of generality, that red is missing in $D$.  Since we have at least $(\frac{1}{2}+\ep)n$ red edges in $K_n$ and amongst them only $\frac{i}{4}$ in $I$ and none in $D$, the remaining red edges are all in $E(I,D)$. Let $uv$ be a red edge with $u \in I$ and $v \in D$. Then $u$ belongs to a red or a green edge $ux$ in $I$. Assume first that $ux$  is green. Consider another green edge $x_1x_2$  and a red edge $y_1y_2$ in $I$. Since all these edges form a matching and all other edges between the vertices $u, x_1, x_2, y_1, y_2$ are blue, we can see easily that $vuxx_1x_2y_1y_2$ is a balanced $P_6$. The case that $ux$ is red is completely analogous by taking two green edges $x_1x_2$ and $y_1y_2$.\\

\noindent
\textit{\textbf{Subcase 2.2: $D$ has at least a red edge and a green edge.}} \\
Then blue has to be missing in $D$. Let $uvw$ be a green-red path. Consider one red edge $x_1x_2$ and one green edge $y_1y_2$ in $I$. Let $z \in I \setminus \{x_1,x_2,y_1,y_2\}$. Then all other edges between the vertices $x_1,x_2,y_1,y_2$ and $z$ are blue. Casing upon the color of the edge $wx_1$, it is easily seen that there is a balanced $P_6$ within these vertices. For example, if $wx_1$ is red, then $uvwx_1y_1y_2z$ is a balanced $P_6$. The other two cases are done similarly.
\end{proof}

\subsection{Proof of upper bound of Theorem \ref{thm:generalbounds}}

 In general, the best upper bound for $\bal_3(n, P_{3k})$ we have is the following.
\begin{equation}\label{eq:ub}
\bal_3(n, P_{3k})\leq (k+o(1))n.
\end{equation} In order to prove (\ref{eq:ub}), we will use the following theorem. Recall that the extremal number $ex(n, H)$ is the maximum number of edges of a $H$-free graph on $n$ vertices. Erd\H{o}s and Gallai \cite{ErGa}  determined the extremal number for paths. From this result, it follows that

\begin{equation}\label{eq:ErGa}
ex(n, P_k) \leq \frac{k-1}{2}n.
\end{equation}

Now, we are ready to prove (\ref{eq:ub}). Given a coloring on the edges of $K_n$ with red ($r$), blue ($b$) and green ($g$), we will use the following notation. For $c \in \{r,b,g\}$ and a set $S \subseteq V(K_n)$, we will denote by $e_c(S)$ the number of $c$-colored edges with both vertices in the set $S$. If $S, T \subseteq V(K_n)$ are two disjoint sets, we set $E(S,T)$ for the set of edges with one vertex in $S$ and one in $T$, and $E_c(S,T)$ for the set of $c$-colored edges from $E(S,T)$. Moreover $|E(S,T)| = e(S,T)$, $|E_c(S,T)| = e_c(S,T)$, and $E_c(v,S) = E_c(\{v\},S)$ for a vertex $v$.

\begin{proof}[Proof of  (\ref{eq:ub})] Let $0 < \epsilon < \frac{1}{2}$ be arbitrary and let $n$ be large enough such that all inequalities hold. Consider a $3$-edge-coloring of $K_n$ with at least $(k+\epsilon)n$ edges of each color class. We proceed by induction on $k$ to prove that there is a balanced $P_{3k}$. The case $k=2$ was already done in Theorem \ref{thm:p6}. We assume that  $\bal_3(n, P_{3k'})\leq (k'+o(1))n$ for any $2 \le k' < k$. Suppose now we have a complete graph on $n$ vertices whose edges are colored with red, blue and green such that there are at least $(k+\ep)n$ edges from each color. By the induction hypothesis, there is a balanced $P_{3(k-1)}$, say $P$. Let $C = V(P)$ and $D = V(K_n) \setminus C$. We will distinguish two cases.\\

\noindent
\textit{\textbf{Case 1: $D$ has no edges from at least one color.}} \\
 Assume, without loss of generality, that red is missing in $D$. Then $e_r(C,D) = e_r(K_n) - e_r(C) \ge kn$, if $n$ is large enough.\\
 
 \noindent
 \textit{\textbf{Subcase 1.1: Suppose that $e_b(D) \ge \left(\frac{k-1}{2}+\epsilon \right) n$ and $e_g(D) \ge \left(\frac{k-1}{2}+\epsilon \right) n$.}} \\
 Then, by Theorem \ref{thm:bal_2_paths}, there is a balanced blue-green $2k$-path, say $Q$, contained in $D$. Moreover, since $n$ is large, we still have $e_r(C, D \setminus V(Q)) \ge kn$. Hence, because of (\ref{eq:ErGa}), there is a red $k$-path, say $R$, contained in the red graph induced by the edge set $E_r(C, D \setminus V(Q))$.  Let $a, b \in Q$  be the end-vertices of the path $Q$, and let  
$u, v \in V(R)$ be the end-vertices of the path $R$. 
We will show that there is a way of connecting both paths to have a balanced $3$-colored $P_{3k}$. Consider the cycle $S$ formed by the two paths $Q$ and $R$ and the edges $au$ and $bv$. Let $v'v$ be the last edge in path $R$. If the edge $ua$ is red, we can easily see that $S-v$ is a balanced $3$-colored $3k$-path. Hence, $au$ is either green or blue. The same occurs with the edge $bv$. Suppose now $au$ and $bv$ are both of the same color, say blue. Then the path consisting of the blue-green path $Q$ together the edges $au$ and $bv$ has $k+2$ blue edges and $k$ green edges. Thus, it has to contain two consecutive blue edges, say $xy$ and $yz$. Then $S - y$ is a balanced  $3$-colored $3k$-path. If, on the other side, $au$ and $bv$ are one blue and one green, then we can take two consecutive edges $xy$ and $yz$ from $Q$ such that they have different color and we can see that $S - y$ is a balanced $3$-colored $3k$-path. \\

\noindent
\textit{\textbf{Subcase 1.2: Suppose $e_b(D) < \left(\frac{k-1}{2}+\epsilon \right) n$ or $e_g(D) < \left(\frac{k-1}{2}+\epsilon \right) n$.}} \\
 Without loss of generality, we assume $e_b(D) < \left(\frac{k-1}{2}+\epsilon \right) n$. Then, since for $n$ large enough we have $e_b(K_n) - e_b(C)  \ge kn$, and it follows that
\[e_b(C,D) = e_b(K_n) - e_b(C) - e_b(D) \ge kn - \left(\frac{k-1}{2}+\epsilon \right) n =  \left(\frac{k+1}{2}-\epsilon \right)n.\]
We will show first that there is a blue, a red, and a green $k$-path, pairwise disjoint. By the above inequality, $e_b(C,D) \ge \left(\frac{k+1}{2}-\epsilon \right)n.$ and so by (\ref{eq:ErGa}), the graph induced by the edges contained in $E_b(C,D)$ contains a path of length $k+1$, say $B'$. If $k$ is odd, let $x$ be one of the two end-vertices  of $B'$. If $k$ is even, $B'$ has one end-vertex in $C$ and one in $D$. In this case, let $x$ be the end-vertex of $B'$ contained in $C$. Now define $B = B' - x$. Observe that, for $k$ even or odd, we have 
\begin{equation}\label{eq:C_cap_B}
|C \cap V(B)| = \left \lfloor\frac{k+1}{2}\right\rfloor.
\end{equation} 
We will show now that $e_r(C \setminus V(B), D) \ge \frac{k-1}{2}n$ by using the fact that
 \[e_r(C \cap V(B) , D) \le |C \cap V(B)| |D| \le \left \lfloor\frac{k+1}{2}\right\rfloor n,\]
 that holds because of (\ref{eq:C_cap_B}). 
Indeed,
\[e_r(C \setminus V(B), D) = e_r(C,D) - e_r(C \cap V(B), D) \ge kn - \left \lfloor\frac{k+1}{2}\right\rfloor n \ge \frac{k-1}{2}n.\]
 Hence, by Erd\H{o}s-Gallai (\ref{eq:ErGa}), there is a red path on $k$ edges, say $R$, contained in $E_r(C \setminus V(B), D)$.
Finally, consider the set $D' =  D \setminus (V(B) \cup V(R))$. Since there are only green and blue edges in $D$ and $e_b(D) < \left(\frac{k-1}{2} - \epsilon \right) n$ by hypothesis of this case, we have
\[e_g(D') = \binom{|D'|}{2} - e_b(D') \ge \binom{|D'|}{2} - \frac{k-1}{2}n = \Theta(n^2).\]
Hence, the set $D'$ has $\Theta(n^2)$ green edges and thus clearly it has to contain a green $k$-path $G$, again by (\ref{eq:ErGa}).\\

We will show now that with these three paths $B$, $R$ and $G$, we can construct a balanced $3$-colored $P_{3k}$. Let $b, b'$ be the end-vertices of $B$, $r,r'$ the end-vertices of $R$ and $g,g'$ the end-vertices of $G$ and consider the cycle $S$ formed by the three paths together with the edges $b'r$, $r'g$ and $g'b$. Suppose two of the edges $b'r$, $r'g$, $g'b$ are of the same color, say green. We call $c$ the color of the third edge. Then, no matter where this $c$-colored edge is situated, we can easily see that there is a $3$-path $wxyz$ contained in the cycle such that $wx$ and $xy$ are green and $yz$ has color $c$. Deleting the vertices $x$ and $y$ from $S$ gives a balanced $P_{3k}$ and we are done. Hence we may assume that the edges $b'r$, $r'g$ and $g'b$ are all from different color. If $b'r$ is green, we can consider the $3k$-path that is formed by deleting vertices $b'$ and $r$ from $S$. Similarly happens if $r'g$ is blue or $g'b$ is red. Hence, we may assume that the cycle $S$ consists of a blue, a green and a red $P_{k+1}$ glued together. Without loss of generality, we assume that $b'r$ is red, $r'g$ is green and $g'b$ is blue. Let $r''$ be the neighbor of $r'$ on the red path $R$ and consider the edge $gr''$ and the cycle $S' = (S - r') + gr''$. If $gr''$ is blue, then $S' - b$ is a balanced $P_{3k}$. If $gr''$ is red, then $S' - b'$  is a balanced $P_{3k}$. Finally, if $gr''$ is green, then $S' - g'$ is a balanced $3k$-path.\\

\noindent
\textit{\textbf{Case 2: $D$ has edges from all three colors.}}\\
If there is a rainbow triangle in $D$, we are done by Lemma \ref{lem:norainbow}. If there is no rainbow triangle in $D$, then the $3$-coloring in $D$  is a Gallai coloring (see \cite{Gallai, GySi, KoSiTu}) and, by \cite{BiDiVo} (see also \cite{GSSS}), we know that the graph induced by the edges of one of the colors is spanning in $D$. Without loss of generality, assume that the spanning color in $D$ is green. Let $x$ and $y$ be the end-vertices of the path $P$. 

From here, we build the proof by contradiction, assuming that there is no $3$-colored balanced $P_{3k}$. \\

\noindent
\textit{\textbf{Claim 1:}} There is no red-blue $P_2$ in $D$. \\
If $abc$ is a red-blue $2$-path in $D$ we can  construct a balanced $P_{3k}$ as follows. Since there are no rainbow triangles, assume without lost of generality that $ac$ is red. Since color green is spanning in $D$, we know there are vertices  $d,d'\in D$ such that $da$ and $d'b$ are green (where $d=d'$ is possible).  Observe now that, depending on the color of $cx$, one of $abcP$, $d'bcP$ or $dacP$ will be a balanced $P_{3k}$.  \hspace*{\fill}$\diamond$\\

\noindent
\textit{\textbf{Claim 2:} All edges from $\{x,y\}$ to $D$ are green.}\\
Suppose that there is a vertex $c\in D$ such that $xc$ is not green, say, without lost of generality, it is blue. If there is a vertex $b\in D$ such that $bc$ is red, then we can easily build a  balanced $3k$-path, namely $abcP$, where $a$ is such that $ab$ is green (recall that color green is spanning in $D$). On the other hand, if there is no red edge in $D$ incident to $c$, consider a red edge $ab$ in $D\setminus\{c\}$. By Claim 1, $ac$ is not blue, so it has to be green and, again, we can easily build a  balanced $P_{3k}$. Hence, all edges from $x$ to $D$ are green. By symmetry, all edges from $y$ to $D$ are green, too.
 \hspace*{\fill}$\diamond$\\

\noindent
For the next claims we consider  a balanced $3$-path $Q=abcd$ in $D$, which exists because of Lemma \ref{prop:p3is0}. By Claim 1, the middle edge is green and, moreover, all remaining edges induced by $V(Q)$ are green.\\

\noindent
\textit{\textbf{Claim 3:}} The end-edges of $P$ are not green.\\
Suppose one end-edge of $P$ is green, say the edge incident to $y$. Then the path $abcdP-y$ is a  balanced $3k$-path  since, according to Claim 2, $dx$ is green. \hspace*{\fill}$\diamond$\\

\noindent
\textit{\textbf{Claim 4:}} There are no consecutive green edges in $P$.\\
If $uvw$ is a green-green path in $P$, consider the cycle $S=abcPa$.  Then, $S-v$ is a   balanced $P_{3k}$. \hspace*{\fill}$\diamond$\\

\noindent
\textit{\textbf{Claim 5:}} The edge $xy$ is green.\\
Suppose that $xy$ is not green, say, without lost of generality, it is red.  Consider the cycle $S=Px$ that has $k-1$ blue edges, $k-1$ green edges and $k$ red edges. Note that, for any red edge $uv$ in $P$, the path obtained from $S$ by removing the edge $uv$ plays the same role as $P$. Thus, by Claim 3, the end-edges of this new path, $S-\{uv\}$, are not green. This means that no red edge  can be next to a green edge in $S$. Then, since green edges form a matching, each green edge has to be preceded and succeeded by a blue edge, implying that there have to be more blue edges than green edges in $S$, which is a contradiction.
 \hspace*{\fill}$\diamond$\\

Now that we know that the edge $xy$ is green we are going to work with the cycle $S=Px$, which  has $k-1$ blue edges, $k-1$ red edges and $k$ green edges, where the green edges form a matching. Recall that $V(P)=C=V(S)$ \\

\noindent
\textit{\textbf{Claim 6:}} There are at most $k-2$ vertices in $C$ incident on red or blue edges from $E(C,D)$. \\ 
Let $uv$ be a green edge in $S$ and consider  the path $S-\{uv\}$. Note that this new path, $S-\{uv\}$, plays the same role as $P$. Then, by Claim 2, all edges from  $\{u,v\}$ to $D$ are green. Since there are no adjacent green edges in $S$,  this leaves us $(3k-2)-2k=k-2$  vertices allowed to send red or blue edges to $D$.  \hspace*{\fill}$\diamond$\\

By a simple counting argument, it must be that $S$ contains a subpath $uvwz$ which is green-notgreen-green. Suppose, without lost of generality, that $vw$ is red.\\

\noindent
\textit{\textbf{Claim 7:}} The graph induced by the red edges in $D$ is a matching.\\
If this is not the case, take a red-red path, $abc$, in $D$. Consider a blue edge $de$ in $D$. By Claim 1, we know that $\{d,e\}\cap\{a,b,c\}=\emptyset$ and the edge $cd$ is green. Let $P'=S-\{v,w\}$. Note that $P'$ is a $(3k-5)$-path with  $k-1$ blue edges, $k-2$ red edges and $k-2$ green edges. Hence, $abcdeP'$ is a  balanced $P_{3k}$, as the edges from $u$ (or $z$) to $D$ are green, by considering $S\setminus\{wz\}$ (or $S\setminus\{uv\}$). \hspace*{\fill}$\diamond$\\

Now by using Claims 6 and 7, we can count the maximum number of red edges in order to get a contradiction. 
\begin{align*}
e_r(K_n) &=e_r(C,D)+ e_r(D)+   e_r(C)\\
&\le  (k-2)|D|+\frac{|D|}{2}+o(n) \\
&\le \left( k - \frac{3}{2}\right) n + o(n) \\
&=  \left(\left( k - \frac{3}{2}\right) + o(1) \right)n,
\end{align*}
which is not possible by hypothesis.
Hence, we have shown that, for any $0 < \epsilon < \frac{1}{2}$, $\bal_3(n, P_{3k}) < (k+\epsilon)n$ and we obtain the desired bound (\ref{eq:ub}).
\end{proof}
\section{Balanced paths with many colors}
In this section our goal is to prove Theorems \ref{negative} and \ref{positive}. The former shows that it is hopeless to find a straightforward generalization of our bounds on $\bal_r(n, P_{rk})$ when $r=3$ to arbitrary $r$, as it will be the case that for any odd $k$, there are infinitely many $r$ such that $\bal_r(n, P_{rk})=\Omega(n^2)$. On the other hand, the latter will give some hope of generalizing our bounds under the assumption that $k$ is even and sufficiently large with respect to $r$. 
\begin{proof}[Proof of Theorem \ref{negative}]
Given an odd $k$, we consider an $r$ of the form $r=\binom{l}{2}+1$, with $l\geq 4$, and $l$ even. As there are infinitely many such $r$, it will be enough to display an $r$-colored complete graph on $n$ vertices with $\Theta(n^2)$ edges in each color that avoids a balanced embedding of a $P_{rk}$.
\par We divide the vertices of $K_n$ into $l$ sets of size as equal as possible, and call them $V_i$, $i\in[l]$. We color each complete bipartite graph $V_i\times V_j$, $i\neq j$, a different color. With the remaining color, say $c_l$, we color everything else. That is, we color all edges contained in any of the $V_i$'s the same color. 
\par Towards a contradiction, assume that we have a balanced embedding of $P_{rk}$ into this coloring. We will now define an auxiliary multi-graph with vertex set $\{V_i\}_{i\in [l]}$. If we contract all edges of color $c_l$ of the balanced $P_{rk}$ in the embedding, we obtain a balanced copy of $P_{(r-1)k}$. (This copy will not necessarily have a valid embedding onto the original graph, but we will simply use this $P_{(r-1)k}$ to define the edges of the auxiliary multigraph.) 
\par Every time the balanced $P_{(r-1)k}$ uses an edge between $V_i$ and $V_j$, we add a new edge between the corresponding vertices in the multi-graph. 
Observe now that the embedding in the original graph corresponds to an Eulerian trail in the auxiliary multigraph. Thus each vertex in the multigraph corresponding to a $V_i$ which does not contain the start or end vertex of the embedding must have even degree. Observe further that such a vertex in the multigraph must exist, by our assumption that $l\geq 4$. Finally, notice that such a vertex sends an odd number of edges to each of its neighbors, as $k$ is odd, and it has an odd number of neighbors, as $l$ is even, contradicting the fact that this vertex has even degree. 
\end{proof}
\begin{proof}[Proof of Theorem \ref{positive}]
Given $r$, our goal this time is to find a balanced copy of $P_{2rk}$ (for $k$ sufficiently large) in a coloring of a $K_n$ for $n$ sufficiently large. Let $k_0$ be sufficiently large, it's value to be specified later in the proof. Let $k\geq k_0$, and we choose $n$ so that an $r$-coloring of a $K_n$ with $C_{r,k}n^{2-1/10rk}=o(n^2)$ edges in each color class contains a member from $ \mathcal{F}_{10rk}^r$, by Theorem \ref{thm:extremalmulticolorbollobas}. 
\par Fix such a coloring of a $K_n$, and call the resulting unavoidable graph $K$. This $K$ will require us to select a particular value of $k_0'$, for which $k\geq k_0'$ will be sufficient for the argument to go through. As it will follow that we get a corresponding value of $k_0'$ for every possible $K \in \mathcal{F}_{10rk}^r$ we end up with, we will set $k_0$ in the beginning of the proof as the maximum over all such $k_0'$ (recall that there are only finitely many unavoidable configurations for each $r$). Hence for $k\geq k_0$, the embedding argument will go through, regardless of which graph $K \in \mathcal{F}_{10rk}^r$ we start with.   
\par Now, having fixed a $K$, we will specify an embedding of $P_{rk}$ into $K$, and observe that we will never run out of edges in the process, as we were generous in the beginning, and each part in $K$ has more than enough vertices to complete the embedding. 
\par Let $l$ be the number of parts in $K$ (note $l\leq 2r-2$ by the remark in the Introduction), let $V_i$ ($i\in [l]$) be the parts in $K$, and let $c_i$ ($i\in[r]$) be the set of colors. Define $$\mathcal{C}:=\{c_i\colon c_i \text{ is the color of one of the complete bipartite graphs } V_i\times V_j \text{ with } i\neq j \}.$$
\par So $\mathcal{C}$ is the set of all colors that appear on edges going between the parts of $K$. 
\begin{claim} It is sufficient to embed a balanced $P_{|\mathcal{C}|k}$ into $K\setminus \bigcup_{i\in[l]}E(V_i)$ \end{claim}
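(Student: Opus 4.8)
The plan is to show that the claimed statement is essentially a tautology once one understands how a balanced $P_{rk}$ in $K$ decomposes according to where its edges lie. The key distinction is between edges that stay inside a single part $V_i$ (all of which have the same color, call it the \emph{interior color} $c_l$) and edges that cross between parts. The colors in $\mathcal{C}$ are precisely those available on crossing edges. First I would observe that any color $c \notin \mathcal{C}$ can \emph{only} appear on interior edges. If $c_l \in \mathcal{C}$ as well (i.e. the interior color also appears as a crossing color), then $\mathcal{C}$ is the set of \emph{all} $r$ colors and there is nothing to reduce — so the content of the claim is in the case where some colors are confined to the interiors of the parts.

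\textbf{The embedding strategy.} The idea is that once we have embedded a balanced $P_{|\mathcal{C}|k}$ using only crossing edges (equivalently, a balanced path in the graph $K \setminus \bigcup_{i\in[l]} E(V_i)$, which uses exactly the colors of $\mathcal{C}$, each $k$ times), we can extend it to a full balanced $P_{rk}$ by \emph{inserting} interior edges to account for the missing colors. Concretely, for each color $c \notin \mathcal{C}$, we must add exactly $k$ edges of color $c$, and each such edge lies inside some part $V_i$. The crucial point is that the parts $V_i$ are enormous (each has $\Theta(n)$ vertices, far more than the $rk$ vertices the path uses), so whenever the balanced $P_{|\mathcal{C}|k}$ visits a vertex in a part $V_i$ whose interior carries a needed color, we have room to detour: replace a visit to $V_i$ by a short excursion $\dots \to v \to w \to \dots$ with $v,w \in V_i$ and $vw$ an interior edge of the desired color, splicing in the required count of interior edges of each confined color. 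Since all interior edges of a part share the single interior color $c_l$, in the typical situation there is really only one confined color to account for, and we simply need to insert $k$ copies of it via such detours.

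\textbf{What to verify.} The steps I would carry out are: (i) partition the color set as $\mathcal{C} \sqcup (\text{colors on interior edges only})$ and note each confined color must contribute exactly $k$ edges to a balanced $P_{rk}$; (ii) take a balanced $P_{|\mathcal{C}|k}$ in $K \setminus \bigcup_i E(V_i)$ as hypothesized, which accounts perfectly for the $\mathcal{C}$-colors; (iii) splice in the interior edges of the confined colors by short detours within the large parts, using that $|V_i| \gg rk$ to guarantee fresh vertices are always available; and (iv) check the resulting walk is a genuine path (no repeated vertices) of length exactly $rk$ with each color appearing exactly $k$ times. The arithmetic that $|\mathcal{C}| + (\text{number of confined colors}) = r$ and that the edge counts add to $rk$ is what makes the splicing produce a \emph{balanced} $P_{rk}$.

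\textbf{The main obstacle.} The delicate point is ensuring the detours can be inserted so that the confined colors each land at \emph{exactly} $k$ edges, not merely at least $k$; this requires that the relevant parts $V_i$ actually contain interior edges of the needed colors and that we have enough independent insertion points along the path to realize the precise counts. The reason this works — and the reason $k$ must be taken large in the ambient Theorem~\ref{positive} — is that membership in $\mathcal{F}_{10rk}^r$ guarantees every part has at least $10rk$ vertices and every color genuinely appears, so each confined color is available in some part's interior with room to spare. I expect this bookkeeping, confirming that the insertions can be distributed to hit the exact balanced counts while keeping the walk a simple path, to be the only real content; the reduction itself is then immediate, as embedding the balanced $P_{|\mathcal{C}|k}$ on crossing edges is exactly the sub-task the claim isolates.
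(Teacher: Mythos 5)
Your proposal matches the paper's argument: for each color outside $\mathcal{C}$ (which by definition can only occur inside some monochromatic part $V_i$), one splices a length-$k$ monochromatic path of that color into the existing embedding using interior edges of that part, which the large part sizes permit. One small slip worth noting: you write as if all parts share a single interior color $c_l$, whereas in a general member of $\mathcal{F}_{10rk}^r$ different parts may be cliques of different colors — but your steps (i)--(iv) already handle this general situation, so the argument is essentially the paper's.
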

 As $K \in \mathcal{F}_{10rk}^r$, it uses all $r$ colors. If $|C|=r$, the claim is obvious. Otherwise, there are colors which appear only inside one of the $V_i$. For any such color, note it is easy to insert a path of length $k$ from that color into whatever embedding we already have, using only edges from that clique. This simple observation establishes the claim. 
 \par We proceed to find an embedding of $P_{|\mathcal{C}|k}$ in $K$ that uses only edges from the complete bipartite graphs. We do this again by associating an auxiliary graph, and arguing this auxiliary graph contains a Eulerian circuit. As we will find a circuit, our result will be valid even if we were trying to find a balanced cycle, as remarked in the Introduction.
 \par For $c_i\in \mathcal{C}$, let $\#c_i$ be the number of times the color $c_i$ appears in one of the bipartite graphs $V_i\times V_j$. We now at last specify our choice of $k'_0$:
 $$k'_0 := \text{lcm}({\#c_i \colon c_i\in\mathcal{C}})$$
 where lcm denotes the least common multiple of a set. For $k\geq k'_0$, we will now find a $P_{|\mathcal{C}|k}$ in $K$ minus the edges in one of the cliques.
 \par As in the previous proof, we define a multi-graph for which the vertices correspond to the $V_i$, and put $2k'_0/\#c_i$ many edges between each vertex corresponding to $V_i$ and $V_j$, where $c_i$ is of course the color of $V_i\times V_j$ in $K$. As $2k'_0/\#c_i$ is an even integer, we find a Eulerian circuit in our graph, which naturally induces an embedding of a balanced $P_{2kr}$ in $K$, allowing us to conclude.
\end{proof}

\section{Discussion}
\par We conjecture that removing the $r^r$ term in the exponent in Theorem \ref{thm:extremalmulticolorbollobas} is possible, and this remains an intriguing problem (Conjecture~\ref{mainconjecture}). We have already expanded on some tools which we believe will be useful in Section \ref{sec:improvements}. Such a bound would be essentially tight, as further improvements would have to improve the best known bounds on $\textrm{ex}(n, K_{t,t})$.
\par Closing the gap from Theorem \ref{thm:generalbounds} is also an open problem. We conjecture that the lower bound should be the truth, but new ideas are needed to improve on our upper bound.
\par A perhaps more elementary question that remains is determining for which graphs $G$ we have $\bal_r(n,G)<\infty$. We call an embedding of $G$ into an $r$-edge-colored $K_n$ a \textit{balanced embedding} if all $r$ color classes are almost equally represented as in Definition \ref{def:bal_r}. We have the following abstract characterization.
\begin{proposition}
Let $G$ be any graph of order $q$, and $r\geq 2$ an integer. Then $\bal_r(n, G)=o(n^2)$ if and only if there exists a balanced embedding of $G$ into all patterns from $\mathcal{F}_q^r$.
\end{proposition}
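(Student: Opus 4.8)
The plan is to prove both implications by using Theorem~\ref{thm:multicolorbollobas} as a bridge between dense colorings of $K_n$ and the finite family of unavoidable patterns, while repeatedly exploiting two elementary features of the family. First, by remark~(1) after the definition, the color of any edge of a member of $\mathcal{F}_q^r$ depends only on which parts its endpoints lie in; consequently whether a copy of $G$ inside such a member is balanced is determined entirely by the \emph{type vector} that records, for each vertex of $G$, the index of the part it is sent to. Second, balancedness is invariant under permuting the $r$ colors, so it suffices to produce a balanced embedding into one representative of each pattern. These two facts let me move balanced copies back and forth between blow-ups of very different sizes.

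For the backward direction (balanced embeddings into all patterns $\Rightarrow \bal_r(n,G)=o(n^2)$), I would fix $\varepsilon>0$ and set $t=q$ (if $q\le 1$ then $G$ has no edges and every embedding is balanced, so assume $q\ge 2$). By Theorem~\ref{thm:multicolorbollobas}, for $n\ge \varepsilon^{-cq}$ any $r$-coloring of $K_n$ with at least $\varepsilon\binom{n}{2}$ edges in each color contains a member $H$ of $\mathcal{F}_q^r$. Up to a permutation of colors, the coloring of $H$ realizes one of the patterns of $\mathcal{F}_q^r$, and by hypothesis that pattern admits a balanced embedding of $G$; since $H$ has parts of size exactly $q$, this embedding produces a balanced copy of $G$ inside $H$, hence inside $K_n$. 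Therefore $\bal_r(n,G)\le \varepsilon\binom{n}{2}$ for all large $n$, and letting $\varepsilon\to 0$ yields $\bal_r(n,G)=o(n^2)$.

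For the forward direction I would fix an arbitrary member $H_0\in\mathcal{F}_q^r$ with parts $V_1,\dots,V_k$, each of size $q$, and build a balanced embedding of $G$ into $H_0$. The idea is to blow $H_0$ up: for large $N$ divisible by $k$, replace each $V_i$ by a set $W_i$ of $N/k$ vertices and color $K_N$ by the block rule (edges between $W_i$ and $W_j$ receive the color of $V_i\times V_j$, edges inside $W_i$ receive the color of $V_i$). Since $H_0$ uses every color by condition~(2) and each block is a complete or complete bipartite graph on $\Theta(N)$ vertices, every color appears on at least $\delta N^2$ edges for some $\delta=\delta(r)>0$ (using $k\le 2r$). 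As $\bal_r(N,G)=o(N^2)$, for $N$ large we have $\bal_r(N,G)<\delta N^2$, so the blow-up contains a balanced copy of $G$. Projecting each vertex of this copy to the index of the part $W_i$ containing it produces a type vector in which each index is used at most $q$ times; placing the corresponding vertices of $G$ into distinct vertices of the size-$q$ parts $V_i$ gives an injective embedding of $G$ into $H_0$. Because edge colors depend only on part membership, this embedding induces the same color multiset as the copy in the blow-up, hence is balanced.

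The main obstacle is exactly this last \emph{compression} step: one must verify that a balanced copy guaranteed only inside an enormous blow-up actually descends to a balanced copy inside the small member $H_0$. What makes it go through is twofold: colors are a function of part membership (so only the type vector matters), and $G$ has precisely $q$ vertices while each part of $H_0$ also has $q$ vertices, so however the balanced copy spreads across the parts of the blow-up there is always enough room to reproduce its type vector inside $H_0$. The backward direction, by contrast, is a clean application of Theorem~\ref{thm:multicolorbollobas}, the only subtlety being to choose the part size $t=q$ large enough to host $G$.
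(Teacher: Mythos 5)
Your proof is correct and takes essentially the same route as the paper: one direction finds a member of $\mathcal{F}_q^r$ in any sufficiently dense coloring (you invoke Theorem~\ref{thm:multicolorbollobas} where the paper uses Theorem~\ref{thm:extremalmulticolorbollobas}, which additionally yields the sharper bound $\bal_r(n,G)=O(n^{2-1/q})$), and the other direction rests on the blow-up of a pattern, which is exactly the paper's extremal construction stated in contrapositive form. Your explicit ``compression'' step --- that a balanced copy in the blow-up descends to a balanced copy in the size-$q$ pattern because colors depend only on part membership and $G$ has only $q$ vertices --- is precisely the (implicit) reason the paper's blow-up avoids a balanced $G$ when the pattern admits no balanced embedding.
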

\begin{proof}
If $G$ admits a balanceable embedding into all patterns from $\mathcal{F}_q^r$, by Theorem \ref{thm:extremalmulticolorbollobas} we will be able to find balanced copies of $G$ in sufficiently large complete graphs with $\Theta(n^{2-1/v(G)})$ edges in each color. On the other hand, if $G$ does not admit a balanced embedding into a particular pattern from $\mathcal{F}_q^r$, we can use this pattern to construct a $K_n$ with $\Theta(n^2)$ edges in each color class, hence it cannot be that $\bal_r(n, G)=o(n^2)$.
\end{proof}
\par This characterization does not rule out the possibility that there might be balanceable graphs with $\bal_r(n, G)=\Theta(n^2)$. We find this to be unlikely, but we are unable to prove it for $r \ge 3$,  in contrast to the case $r=2$,  where it was shown that balanceable graphs have always $\bal(n, G)=o(n^2)$ \cite{caro}. For example, by investigating Figure \ref{fig:F3}, one can verify that $\bal_3(n, C_{6k})=o(n^2)$, and $\bal_3(n,C_{6k+3})=\Omega(n^2)$. We leave it as an open problem to determine whether $\bal_3(n,C_{6k+3})=\infty$. \par It would in general be interesting to obtain results for cycles for $r \ge 3$ (see \cite{DEHV} for the case $r=2$).

\section*{Acknowledgements}

The present work was concluded during the workshop \emph{Zero-Sum Ramsey
Theory: Graphs, Sequences and More} (19w5132). We thank the facilities provided by the BIRS-CMO research station. 

The second author was partially supported by PAPIIT IN111819 and CONACyT project 282280. The third author was partially supported by PAPIIT IN116519 and CONACyT project 282280.


\end{document}